\definecolor {refcol}{RGB}{40,0,255}
\newfont{\footsc}{cmcsc10 at 8truept}
\newfont{\footbf}{cmbx10 at 8truept}
\newfont{\footrm}{cmr10 at 10truept}
\newtheorem{theorem}{Theorem}[section]
\newtheorem{corollary}{Corollary}[section]
\newtheorem{definition}{Definition}[section]
\newtheorem{example}{Example}
\newtheorem{lemma}{Lemma}
\newtheorem{problem}[theorem]{Problem}
\newtheorem{proposition}[theorem]{Proposition}
\newtheorem{remark}{Remark}[section]
\newenvironment{proof}[1][Proof]{\noindent{\textbf {#1}  }}  {\hfill$\Box$\bigskip}
\begin{document}
\title{\textbf{New results about the Inverse Eigenvalue Problem of a Graph}}
\author{Roberto C. D\'iaz$^{a}$\thanks{
Corresponding author} \ and \ Ana I. Julio$^{a}$
\\
$^{a}${\small Departamento de Matem\'{a}ticas, Universidad Cat\'{o}lica del
Norte }\ \\
{\small Casilla 1280, Antofagasta, Chile.}}
\date{}
\maketitle
\title{}

\begin{abstract}
All graphs considered are simple and undirected. The Inverse Eigenvalue Problem of a Graph $G$ (IEP-G) aims to find all possible
spectra for matrices whose $(i,j)-$entry, for $i\neq j$, is nonzero precisely when $i$ is adjacent to $j$. A cluster in a graph $G$ is a pair of vertex subsets $(C, S)$, where $C$ is a maximal set of cardinality $\vert C\vert\geq 2$ of independent vertices sharing the same set $S$ of $\vert S\vert$ neighbors. Let $G$ be a connected graph on $n$ vertices with a cluster $(C, S)$ and $H$ be a graph of order $\vert C\vert$. Let $G(H)$ be the connected graph obtained from $G$ and $H$ when the edges of $H$ are added to the edges of $G$ by identifying the vertices of $H$ with the vertices in $C$. In this paper, we construct a symmetric matrix with associated complete graph, which satisfies some interesting properties. This result is applied to obtain new sufficient conditions on the IEP-G, when $G$ is a graph of order $n$ having a clique of order $k$ and a cluster $(C,S)$, where $\vert C\vert=n-k$ and $\vert S\vert=r\leq k$, as well as, for the graph $G(K_{n-k})$, being $G$ as before. In particular, when $G$ is a graph obtained from $K_{n}$ by deleting a single edge $e_{k}\in E(K_{n})$, we establish a necessary and sufficient condition on the IEP-G. Several illustrative example are given. The constructive nature of our results generate algorithmic procedures that always allow one to compute a solution matrix.
\end{abstract}

\noindent  \textbf{MSC 2010}: 05C50, 05C76, 15A18, 15B57, 65F18.\\
\noindent   \textbf{Keywords}: Graph operations; Symmetric matrix; Eigenvalues; Inverse Eigenvalue Problem; Cluster; Clique.

\section{Introduction}
Let $G=\left( V\left( G\right) ,E\left( G\right) \right)$ be a simple undirected graph with vertex set $V(G)=\{1,\ldots,n\}$ and edge set $E(G)=\{ij \ : \ i, j\in V(G)\}$. The cardinality of $V\left( G\right)$ is called the order of $G$. When
$e_{k}=ij\in E\left( G\right)$ has end vertices $i$ and $j$, then we say that $i$ and $j$ are adjacent. If
$i\in V\left( G\right)$, then $N_{G}(i)$ is the set of neighbors of $i$ in $G$, that is,
$N_{G}(i) = \{j\in V\left( G\right) : ij\in E\left( G\right)\}$. The cardinality of $N_{G}(i)$ is said to be the \textit{degree} of $i$.
\\

The adjacency matrix of a graph $G$ of order $n$, $A(G)$, is a $0-1$-matrix of order $n$ with entries $a_{ij}$ such that $a_{ij}=1$
if $ij\in E(G)$ and $a_{ij}=0$ otherwise. Other matrices on $G$ are the Laplacian matrix $L\left( G\right) =D\left( G\right)-A\left( G\right)$
and the signless Laplacian matrix $Q\left( G\right)=D\left( G\right) +L\left( G\right) ,$ where $D\left( G\right) $ is the
diagonal matrix of vertex degrees. It is well known that $L\left( G\right) $ and $Q\left( G\right) $ are positive semidefinite matrices
and that $\left(0,\mathbf{1}\right) $ is an eigenpair of $L\left( G\right) $ where $\mathbf{1}$ is the all ones vector. Other matrices on $G$ are the {\it normalized adjacency matrix} $\widehat{\mathcal{A}}(G)=\sqrt{D(G)}^{-1}A(G)\sqrt{D(G)}^{-1}$, the {\it normalized Laplacian matrix} $\widehat{\mathcal{L}}(G)=\sqrt{D(G)}^{-1}L(G)\sqrt{D(G)}^{-1}=I_{n}-\widehat{\mathcal{A}}(G)$ and the {\it normalized signless Laplacian matrix} $\widehat{\mathcal{Q}}(G)=\sqrt{D(G)}^{-1}Q(G)\sqrt{D(G)}^{-1}=I_{n}+\widehat{\mathcal{A}}(G)$, where $\sqrt{D(G)}=diag\{\sqrt{d_{1}},\ldots,\sqrt{d_{n}}\}$.
\\

By $S_{n}(\mathbb{R})$ we will denote the set of real symmetric $n\times n$ matrices. Given $M\in S_{n}(\mathbb{R})$, $M(i)$ is the submatrix of $M$ with the $i-$th row and $i-$th column removed, $\lambda_{i}(M)$, $i=1,\ldots,n$, its eigenvalues and the spectrum (the multiset of eigenvalues) of $M$ will be denoted by $\sigma(M)=\{\lambda_{1}(M),\lambda_{2}(M),\ldots,\lambda_{n}(M)\}$. By $\lambda^{[m]}$ we will denote the eigenvalue $\lambda$ with algebraic multiplicity $m$.
\\

This paper is devoted to study the {\it Inverse Eigenvalue Problem of a Graph} (IEP-G). Before establishing information about this problem, we need an additional definition: given $A=(a_{ij})\in S_{n}(\mathbb{R})$, the graph of $A$, $\mathcal{G}(A)$, is the graph with vertex set $\{1,\ldots,n\}$ and edges set $\{ij \ : \ a_{ij}\neq 0 \ \ \text{and} \ \ i\neq j\}$. Note that there is no restriction on the diagonal entries of $A$ for determining $\mathcal{G}(A)$. For example, for the matrix
\begin{align*}
\definecolor{qqqqff}{rgb}{0.,0.,1.}
\begin{tikzpicture}
\draw[line width=0.8pt,] (0.,1.5)-- (1.5,1.5);
\draw[line width=0.8pt,] (1.5,1.5)-- (0.,0.);
\draw[line width=0.8pt,] (1.5,1.5)-- (3.,0.);
\draw[line width=0.8pt,] (-1.5,0.)-- (0.,0.);
\draw[line width=0.8pt,] (0.,0.)-- (3.,0.);
\begin{scriptsize}
\draw [fill=black] (0.,0.) circle (1.5pt);
\draw[color=black] (0.,-0.3) node {$2$};
\draw [fill=black] (-1.5,0.) circle (1.5pt);
\draw[color=black] (-1.5,-0.3) node {$1$};
\draw [fill=black] (3.,0.) circle (1.5pt);
\draw[color=black] (3.,-0.3) node {$3$};
\draw [fill=black] (1.5,1.5) circle (1.5pt);
\draw[color=black] (1.5,1.8) node {$4$};
\draw [fill=black] (0.,1.5) circle (1.5pt);
\draw[color=black] (0.,1.8) node {$5$};
\draw (1.5,-0.8) node {$\mathcal{G}(A)$};
\draw (-3.,0.75) node {$\rightsquigarrow$};
\draw (-7.,0.7) node {{\normalsize $A=\begin{bmatrix}
-1 & \frac{1}{2} & 0 & 0 & 0 \\
\frac{1}{2} & 0 & -2 & 1 & 0 \\
0 & -2 & 1 & -1 & 0 \\
0 & 1 & -1 & 1 & -\frac{3}{2} \\
0 & 0 & 0 & -\frac{3}{2} & 2
\end{bmatrix}$}};
\end{scriptsize}
\end{tikzpicture}
\end{align*}

Given a simple undirected graph $G=(V(G),E(G))$, we define by
\[\mathcal{S}(G)=\{A\in S_{n}(\mathbb{R}) : \mathcal{G}(A)=G\}.\]
Note that $A_{G}, \ L_{G}, \ Q_{G}, \ \widehat{\mathcal{A}}_{G}, \ \widehat{\mathcal{L}}_{G}, \ \widehat{\mathcal{Q}}_{G} \ \in \ \mathcal{S}(G)$. The problem of characterizing all lists of real numbers $\{\lambda_{1}, \lambda_{2},\cdots,\lambda_{n}\}$ that can be the spectrum of a matrix $A\in \mathcal{S}(G)$ is known as the {\it Inverse Eigenvalue Problem of a Graph $G$} (IEP-G). This problem is hard to answer in general and it is motivated from the theory of vibrations \cite{Gant, Grah}. The study of the vibrations of a string leads to classical {\it inverse Sturm-Liouville problems}. When $G$ is a path, the IEP-G corresponds to a discretization of the inverse Sturm-Liouville Problem for the string, and leads to the classical study of the inverse eigenvalue problem for Jacobi matrices that was solved by the sequence of papers \cite{Gray, Hald, Hoch}. For this reason, the IEP-G can be viewed as a discrete version of the question: {\it What kind of vibration behaviors (spectra) are allowed on a given structure (graph)}?. \\

As usual, let $P_{n},C_{n},S_{n}$ and $K_{n}$ be the path, cycle, star and the complete graph on $n$ vertices, respectively. Next, we establish a couple of definitions that will play an important role in this study:

\begin{definition}\label{Def1}
{\rm \cite{Car, Di2, Merr} A {\it cluster} in a graph $G$ is a pair of vertex subsets $(C, S)$, where $C$ is a maximal set of independent vertices (vertices not adjacent to each other) of cardinality $\vert C \vert\geq 2$ sharing the same set $S$ of $\vert S \vert$ neighbors.}
\end{definition}

\begin{example}\label{Ex1} Let $G$ be the graph depicted in Figure 1:
\begin{equation*}
\begin{tikzpicture}
\draw [line width=0.7pt] (-2.,0.)-- (-1.,-1.5);
\draw [line width=0.7pt] (-1.,-1.5)-- (1.,-1.5);
\draw [line width=0.7pt] (1.,-1.5)-- (2.,0.);
\draw [line width=0.7pt] (2.,0.)-- (1.,1.5);
\draw [line width=0.7pt] (1.,1.5)-- (-1.,1.5);
\draw [line width=0.7pt] (-1.,1.5)-- (-2.,0.);
\draw [line width=0.7pt] (-1.,1.5)-- (1.,-1.5);
\draw [line width=0.7pt] (1.,1.5)-- (-1.,-1.5);
\draw [line width=0.7pt] (-2.,0.)-- (2.,0.);
\draw [line width=0.7pt] (-1.,1.5)-- (-1.,-1.5);
\draw [line width=0.7pt] (1.,1.5)-- (1.,-1.5);
\draw [line width=0.7pt] (-2.,0.)-- (1.,1.5);
\draw [line width=0.7pt] (-2.,0.)-- (1.,-1.5);
\draw [line width=0.7pt] (2.,0.)-- (-1.,-1.5);
\draw [line width=0.7pt] (2.,0.)-- (-1.,1.5);
%\draw [line width=1.2pt] (0.,3.)-- (2.,4.);
%\draw [line width=1.2pt] (-2.,4.)-- (2.,4.);
%\draw [line width=1.2pt] (-2.,4.)-- (0.,3.);
\draw (2.,4.)-- (1.,1.5);
\draw (0.,3.)-- (1.,1.5);
\draw (-2.,4.)-- (1.,1.5);
\draw (-1.,1.5)-- (2.,4.);
\draw (-1.,1.5)-- (0.,3.);
\draw (-2.,4.)-- (-1.,1.5);
\begin{scriptsize}
\draw [fill=black] (-2.,0.) circle (2.0pt);
\draw (-2.3,0.) node {$9$};
\draw [fill=black] (2.,0.) circle (2.0pt);
\draw (2.3,0.) node {$6$};
\draw [fill=black] (-1.,-1.5) circle (2.0pt);
\draw (-1.,-1.8) node {$8$};
\draw [fill=black] (1.,-1.5) circle (2.0pt);
\draw (1.,-1.8) node {$7$};
\draw [fill=black] (-1.,1.5) circle (2.0pt);
\draw (-1.3,1.5) node {$4$};
\draw [fill=black] (1.,1.5) circle (2.0pt);
\draw (1.3,1.5) node {$5$};
\draw [fill=black] (0.,3.) circle (2.0pt);
\draw (0.,3.3) node {$2$};
\draw [fill=black] (-2.,4.) circle (2.0pt);
\draw (-2.,4.3) node {$1$};
\draw [fill=black] (2.,4.) circle (2.0pt);
\draw (2.,4.3) node {$3$};
\draw (0.,-2.5) node {\text{Fig 1. Graph} $G$};
\end{scriptsize}
\end{tikzpicture}
\end{equation*}
We see that $(C, S)$, where $C=\{1,2,3\}$ and $S=\{4,5\}$ is a cluster in $G$.
\end{example}

\begin{definition}
Let $G$ be a graph having a cluster $(C, S)$. Let $H$ be a graph of order $\vert C \vert$. Let $G(H)$
be the graph obtained from $G$ and $H$ when the edges of $H$ are added to the edges of $G$ by identifying the
vertices of $H$ with the vertices in $C$.
\end{definition}

\begin{example}\label{Ex2} Let $G$ be the graph of Example \ref{Ex1}. The graph $G(K_{3})$ is depicted in Figure 2:
\begin{equation*}
\begin{tikzpicture}
\draw [line width=0.7pt] (-2.,0.)-- (-1.,-1.5);
\draw [line width=0.7pt] (-1.,-1.5)-- (1.,-1.5);
\draw [line width=0.7pt] (1.,-1.5)-- (2.,0.);
\draw [line width=0.7pt] (2.,0.)-- (1.,1.5);
\draw [line width=0.7pt] (1.,1.5)-- (-1.,1.5);
\draw [line width=0.7pt] (-1.,1.5)-- (-2.,0.);
\draw [line width=0.7pt] (-1.,1.5)-- (1.,-1.5);
\draw [line width=0.7pt] (1.,1.5)-- (-1.,-1.5);
\draw [line width=0.7pt] (-2.,0.)-- (2.,0.);
\draw [line width=0.7pt] (-1.,1.5)-- (-1.,-1.5);
\draw [line width=0.7pt] (1.,1.5)-- (1.,-1.5);
\draw [line width=0.7pt] (-2.,0.)-- (1.,1.5);
\draw [line width=0.7pt] (-2.,0.)-- (1.,-1.5);
\draw [line width=0.7pt] (2.,0.)-- (-1.,-1.5);
\draw [line width=0.7pt] (2.,0.)-- (-1.,1.5);
\draw [line width=0.7pt] (0.,3.)-- (2.,4.);
\draw [line width=0.7pt] (-2.,4.)-- (2.,4.);
\draw [line width=0.7pt] (-2.,4.)-- (0.,3.);
\draw (2.,4.)-- (1.,1.5);
\draw (0.,3.)-- (1.,1.5);
\draw (-2.,4.)-- (1.,1.5);
\draw (-1.,1.5)-- (2.,4.);
\draw (-1.,1.5)-- (0.,3.);
\draw (-2.,4.)-- (-1.,1.5);
\begin{scriptsize}
\draw [fill=black] (-2.,0.) circle (2.0pt);
\draw (-2.3,0.) node {$9$};
\draw [fill=black] (2.,0.) circle (2.0pt);
\draw (2.3,0.) node {$6$};
\draw [fill=black] (-1.,-1.5) circle (2.0pt);
\draw (-1.,-1.8) node {$8$};
\draw [fill=black] (1.,-1.5) circle (2.0pt);
\draw (1.,-1.8) node {$7$};
\draw [fill=black] (-1.,1.5) circle (2.0pt);
\draw (-1.3,1.5) node {$4$};
\draw [fill=black] (1.,1.5) circle (2.0pt);
\draw (1.3,1.5) node {$5$};
\draw [fill=black] (0.,3.) circle (2.0pt);
\draw (0.,3.3) node {$2$};
\draw [fill=black] (-2.,4.) circle (2.0pt);
\draw (-2.,4.3) node {$1$};
\draw [fill=black] (2.,4.) circle (2.0pt);
\draw (2.,4.3) node {$3$};
\draw (0.,-2.5) node {\text{Fig 2. Graph} $G(K_{3})$};
\end{scriptsize}
\end{tikzpicture}
\end{equation*}
\end{example}

The purpose of the paper is to advance towards the knowledge of a solution of the IEP-G. Our paper is organized as follows: In Section 2, some background and preliminaries of the main concepts used through the text are introduced. In Section 3, we establish our main results. In particular, we construct a matrix $A\in \mathcal{S}(K_{n})$ with prescribed spectrum $\Lambda =\{\lambda_{1},\lambda _{2},\ldots ,\lambda _{n}\}$ where $\lambda _{1}\neq \lambda_{2}$, which satisfies some interesting properties. This result is applied to obtain new sufficient conditions on the IEP-G, when $G$ is a graph of order $n$ having a clique of order $k$ and a cluster $(C,S)$, where $\vert C\vert=n-k$ and $\vert S\vert=r\leq k$, as well as, for $G(K_{n-k})$, being $G$ as before. In particular, when $G$ is a graph obtained from $K_{n}$ by deleting a single edge $e_{k}\in E(K_{n})$, we obtain a necessary and sufficient condition. Some illustrative example are given. The constructive nature of our results generate algorithmic procedures that always allow one to compute a solution matrix.

\section{Preliminaries}
The IEP-G has been solved for only a handful of families of graphs. In this section, we provide some known background about the IEP-G, as well as, we introduce the preliminaries of the main concepts used through the text.
\\ \\
$\blacktriangleright$ {\bf Paths $P_{n}$ and Cycles $C_{n}$:} the IEP-G, when $G=P_{n}$, has been solved when studying the inverse eigenvalue problem for Jacobi matrices
\begin{align}\label{MJac}
\begin{bmatrix}
a_{1} & b_{1} & & & & b_{n} \\
b_{1} & a_{2} & b_{2} \\
& b_{2} & a_{3} & b_{3} \\
& & \ddots & \ddots & \ddots \\
& & & & a_{n-1} & b_{n-1} \\
b_{n} & & & & b_{n-1} & a_{n} \\
\end{bmatrix}, \ \ b_{k}>0 \ \ \text{for all} \ \ k=1,\ldots,n-1, \ \ \text{and} \ \ b_{n}=0,
\end{align}
which was studied in \cite{Hald}, as well as in \cite{Gray, Hoch}, where the authors presented a constructive method for calculating the entries of the
matrix explicitly in terms of the given eigenvalues; while the IEP-G, when $G=C_{n}$, has been solved when studying the inverse eigenvalue problem for periodic Jacobi matrices (i.e., in \eqref{MJac} $b_{k}>0$, for all $k=1,\ldots,n$). Ferguson in \cite{Ferg} studies spectral properties of Jacobi and periodic Jacobi matrices. The author presents an algorithms for the construction of Jacobi and periodic Jacobi matrices with prescribed spectra. These algorithms prove to be of practical utility. For instance, they have been used in studies of the periodic Toda lattice, as well as, to study the inverse eigenvalue problems for Sturm-Liouville equations and Hill's equation. In particular, the results obtained by Ferguson allowed solving the IEP-G, when $G=C_{n}$.

In search to advance towards the knowledge of a solution of the IEP-G, a modified version of the IEP-G arises, known as the {\it $\lambda, \mu -$inverse eigenvalue problem of a graph $G$} ($\lambda, \mu -$IEP-G) \cite{Bar, Duarte}, which asks:
\begin{problem}[$\lambda, \mu -$IEP-G]\label{Q1}{\rm \cite{Duarte, Bar}}
Given a graph $G$ on $n$ vertices, $i\in V(G)$ and $2n-1$ real numbers satisfying the interlacing inequalities \
$\lambda_{1}\geq \mu_{1}\geq \lambda_{2}\geq \mu_{2}\geq \cdots \geq \mu_{n-1}\geq \lambda_{n}$, \
is there a matrix $M\in \mathcal{S}(G)$ with $\sigma(M)=\{\lambda_{1},\lambda_{2},\ldots,\lambda_{n}\}$ and $\sigma(M(i))=\{\mu_{1},\mu_{2},\ldots,\mu_{n-1}\}$?
\end{problem}

The $\lambda, \mu -$IEP-G allowed to solve the IEP-G when the set of real numbers $\{\lambda_{1}, \lambda_{2},\cdots,\lambda_{n}\}$ is chosen satisfying the condition
\begin{align}\label{Id1}
\lambda_{1}>\lambda_{2}>\cdots >\lambda_{n},
\end{align}
for the following graphs:
\\
$\blacktriangleright$ {\bf Trees $T$ and connected graphs} \cite{Duarte, Has}.
\\
$\blacktriangleright$ {\bf Complete graphs and small graphs (up to $4$ vertices)} \cite{Bar, Bar1}.
\\

Motivated by relaxing the condition \eqref{Id1}, the authors in \cite{Jh-Du} have characterized the possible lists of ordered multiplicities among matrices whose graph is a connected graph, for the following subfamilies of trees:
\\
$\blacktriangleright$ {\bf Generalized stars and double generalized stars} \cite{Jh-Du}.
\\
$\blacktriangleright$ {\bf Graphs with up to $5$ vertices:} Barret et al. \cite{Bar2} introduced the {\it strong spectral property} ({\it SSP}), which establishes:
\begin{definition}{\rm \cite{Bar2}}\label{ssp}
{\rm A matrix $A\in S_{n}(\mathbb{R})$ is said to have the {\it strong spectral property} ({\it SSP}) if the zero matrix $X=O$ is the only matrix $X\in S_{n}(\mathbb{R})$ that satisfies:
\begin{enumerate}
\item[$i$.] \ $[A,X] = AX-XA = O$, \ and
\item[$ii$.] \ $A\circ X = I\circ X = O$, \ where $\circ$ denotes the entry-wise product of matrices.
\end{enumerate}
}
\end{definition}
This property has turned out to be a powerful tool. Barret et al. \cite{Bar3} have introduced new techniques to the IEP-G, based on the SSP. They solved the IEP-G for graphs with up to 5 vertices, with the stipulation that there is a matrix realizing each possible spectrum that has the SSP. Considering a new technique utilizing the SSP, the authors in \cite{H.Lin} completely solve the IEP-G for a subfamily of clique-path graphs, in particular for the subfamilies of graphs: \\
$\blacktriangleright$ {\bf Lollipop graphs}. The {\it $(k,p)-$ lollipop graph} $L_{k,p}$ is the graph on $k+p$ vertices obtained by adding an edge between a vertex in a complete graph $K_{k}$ and a leaf of a path graph $P_{p}$. The graph $L_{4,3}$ is depicted below:
\begin{align*}
\begin{tikzpicture}
\draw [line width=0.8pt] (0.,0.)-- (1.,1.);
\draw [line width=0.8pt] (1.,1.)-- (1.,-1.);
\draw [line width=0.8pt] (1.,1.)-- (2.,0.);
\draw [line width=0.8pt] (1.,-1.)-- (2.,0.);
\draw [line width=0.8pt] (0.,0.)-- (1.,-1.);
\draw [line width=0.8pt] (0.,0.)-- (2.,0.);
\draw [line width=0.8pt][color=red] (2.,0.)-- (3.,0.);
\draw [line width=0.8pt] (3.,0.)-- (5.,0.);
\begin{scriptsize}
\draw [fill=black] (2.,0.) circle (1.5pt);
\draw [fill=black] (3.,0.) circle (1.5pt);
\draw [fill=black] (4.,0.) circle (1.5pt);
\draw [fill=black] (5.,0.) circle (1.5pt);
\draw [fill=black] (0.,0.) circle (1.5pt);
\draw [fill=black] (1.,1.) circle (1.5pt);
\draw [fill=black] (1.,-1.) circle (1.5pt);
\draw[color=black] (2.5,-1.3) node {Graph  $L_{4,3}$.};
\end{scriptsize}
\end{tikzpicture}
\end{align*}
For the family of lollipop graphs, the authors obtained in \cite{H.Lin}:
\vspace{-0.2cm}
\begin{theorem}[\cite{H.Lin}, Corollary 2.6]
Let $\sigma=\{\lambda_{1}, \cdots,\lambda_{k},\cdots,\lambda_{k+p}\}$ be a set with $k+p$ real numbers, where $k\geq 2$. Then $\sigma$ is the spectrum of a matrix $A\in \mathcal{S}(L_{k,p})$ ($A$ with the property SSP) if and only if $\sigma$ contains at least $p+2$ distinct elements.
\end{theorem}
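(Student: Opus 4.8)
The plan is to prove the two implications separately; the forward one is a pure distance estimate, the converse a constructive build-up of a realizing matrix that carries the SSP along.

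\textbf{Necessity.} For $k\ge 2$ the lollipop $L_{k,p}$ has diameter $p+1$: a shortest path of length $p+1$ joins a clique vertex distinct from the one of attachment to the far endpoint of the pendant path, and no two vertices are farther apart. It is classical that, for a connected graph $G$, every $A\in\mathcal{S}(G)$ has at least $\mathrm{diam}(G)+1$ distinct eigenvalues --- if $u,v$ are at distance $d$ then $(A^{d})_{uv}\ne 0$ while $(A^{j})_{uv}=0$ for $j<d$, so the minimal polynomial of $A$ has degree at least $d+1$. Hence every $A\in\mathcal{S}(L_{k,p})$, in particular every such matrix with the SSP, has at least $p+2$ distinct eigenvalues. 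No use of the SSP is needed here.

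\textbf{Sufficiency.} Suppose $\sigma$ has $m\ge p+2$ distinct values. The idea is to realize $\sigma$ by starting from a symmetric matrix whose graph is the clique $K_{k}$ and then adjoining the $p$ vertices of the pendant path one at a time, each time by a rank-one bordering (a new off-diagonal weight $b\ne 0$ supported on the current attaching vertex together with a new diagonal entry $c$). Concretely, one picks a chain of multisets $\sigma=\Sigma_{p},\Sigma_{p-1},\dots,\Sigma_{0}$ with $|\Sigma_{i}|=k+i$, each consecutive pair interlacing, such that $\Sigma_{0}$ has at least two distinct values; realizes $\Sigma_{0}$ by a matrix $B\in\mathcal{S}(K_{k})$; and successively realizes $\Sigma_{1},\dots,\Sigma_{p}$ by borderings. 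Two facts make the ends of this chain benign: every matrix of $\mathcal{S}(K_{k})$ has the SSP trivially (the only $X$ with $A\circ X=I\circ X=O$ is $X=O$, since $A$ has all off-diagonal entries nonzero), and any $k$-multiset with two or more distinct values is the spectrum of some $B\in\mathcal{S}(K_{k})$ (conjugate a matrix with that spectrum by a generic orthogonal matrix; its orbit lies in no hyperplane $\{X_{ij}=0\}$, so a generic conjugate has all off-diagonal entries nonzero). When the elements of $\sigma$ are pairwise distinct one may take the chain by deleting the largest element at each step, every bordering is then strict-interlacing, SSP is readily propagated, and $m\ge p+2$ holds automatically since $m=k+p$.

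\textbf{Main obstacle.} The real work is the general case, where $\sigma$ has repeated values: the chain $(\Sigma_{i})$ must be chosen so that, at each bordering, the multiplicities evolve correctly --- keeping a multiplicity of an eigenvalue forces the relevant eigenvectors to vanish at the attaching vertex, dropping it forces them not to, and a pendant path of $p$ vertices bounds how much multiplicity can be accumulated. Making all these constraints simultaneously satisfiable (which typically requires $\Sigma_{0}$ to carry auxiliary eigenvalues not present in $\sigma$, later removed as the path grows) is precisely where the bound $m\ge p+2$ enters. Alongside this, one must invoke (or reprove) the bordering lemma for the SSP: under these spectral hypotheses, adjoining a pendant vertex to an SSP matrix again yields a matrix with the SSP. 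The combination of this spectral bookkeeping with the SSP propagation is the crux of the proof.
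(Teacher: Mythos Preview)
The paper does not prove this theorem; it is quoted as Corollary~2.6 from \cite{H.Lin} (Lin--Oblak--\v{S}migoc) in the preliminaries section as background, with no proof supplied. There is therefore nothing in the present paper to compare your proposal against.

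As to the proposal itself: your necessity argument via the diameter bound $q(A)\ge \mathrm{diam}(G)+1$ is correct and standard. Your sufficiency outline --- realize an auxiliary spectrum on the clique $K_{k}$ (where the SSP is automatic since $A\circ X=I\circ X=O$ forces $X=O$), then grow the pendant path one vertex at a time via interlacing borderings while propagating the SSP --- is indeed the strategy employed in \cite{H.Lin}. However, what you have written is a plan rather than a proof: you explicitly flag the ``main obstacle'' (choosing the chain $\Sigma_{p},\Sigma_{p-1},\dots,\Sigma_{0}$ of interlacing multisets compatibly with the multiplicity constraints, and establishing that the SSP survives each pendant-vertex bordering) without carrying it out. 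The cited paper resolves exactly these two points via a careful inductive construction and a separate bordering lemma for the SSP; your sketch locates the work correctly but does not do it. In particular, the claim that ``$\Sigma_{0}$ may need to carry auxiliary eigenvalues not present in $\sigma$, later removed'' is the heart of the matter and requires a precise specification of which auxiliary values to insert and in what order to remove them --- this is not something that can be waved at.
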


Given two graphs $G$ and $H$, the disjoint union of $G$ and $H$ is denoted by $G\oplus H$. If each of $G$ and $H$ has a vertex labeled as $v$, then the {\it vertex-sum} \ $G\oplus_{v} H$ \ of $G$ and $H$ at $v$ is the graph obtained from $G\oplus H$ by identifying the two vertices labeled by $v$.
\\
$\blacktriangleright$ {\bf Generalized barbell graphs}. A {\it generalized barbell graph} on $k_{1}+p+k_{2}$ vertices is the graph defined as follows: $B_{k_{1},p,k_{2}}=K_{k_{1}}\oplus_{v} P_{p+2}\oplus_{w} K_{k_{2}}$, where $v$ and $w$ are the leaves of $P_{p+2}$. An example of the generalized barbell graph $B_{4,3,6}$ is depicted below:
\begin{align*}
\begin{tikzpicture}
\draw [line width=0.8pt] (0.,0.)-- (1.,1.);
\draw [line width=0.8pt] (1.,1.)-- (1.,-1.);
\draw [line width=0.8pt] (1.,1.)-- (2.,0.);
\draw [line width=0.8pt] (1.,-1.)-- (2.,0.);
\draw [line width=0.8pt] (0.,0.)-- (1.,-1.);
\draw [line width=0.8pt] (0.,0.)-- (2.,0.);
\draw [line width=0.8pt][color=red] (2.,0.)-- (3.,0.);
\draw [line width=0.8pt] (3.,0.)-- (5.,0.);
\draw [line width=0.8pt][color=red] (5.,0.)-- (6.,0.);
\draw [line width=0.8pt] (6.5,1.)-- (7.5,1.);
\draw [line width=0.8pt] (7.5,1.)-- (8.,0.);
\draw [line width=0.8pt] (8.,0.)-- (7.5,-1.);
\draw [line width=0.8pt] (7.5,-1.)-- (6.5,-1.);
\draw [line width=0.8pt] (6.5,-1.)-- (6.,0.);
\draw [line width=0.8pt] (6.,0.)-- (6.5,1.);
\draw [line width=0.8pt] (6.5,1.)-- (8.,0.);
\draw [line width=0.8pt] (7.5,1.)-- (6.,0.);
\draw [line width=0.8pt] (6.5,1.)-- (7.5,-1.);
\draw [line width=0.8pt] (7.5,1.)-- (6.5,-1.);
\draw [line width=0.8pt] (6.5,1.)-- (6.5,-1.);
\draw [line width=0.8pt] (7.5,-1.)-- (7.5,1.);
\draw [line width=0.8pt] (6.,0.)-- (8.,0.);
\draw [line width=0.8pt] (6.5,-1.)-- (8.,0.);
\draw [line width=0.8pt] (7.5,-1.)-- (6.,0.);
\begin{scriptsize}
\draw [fill=black] (2.,0.) circle (1.5pt);
\draw[color=black] (2.1,0.2) node {$v$};
\draw [fill=black] (3.,0.) circle (1.5pt);
\draw [fill=black] (4.,0.) circle (1.5pt);
\draw [fill=black] (5.,0.) circle (1.5pt);
\draw [fill=black] (0.,0.) circle (1.5pt);
\draw [fill=black] (1.,1.) circle (1.5pt);
\draw [fill=black] (1.,-1.) circle (1.5pt);
\draw [fill=black] (6.,0.) circle (1.5pt);
\draw[color=black] (5.9,0.2) node {$w$};
\draw [fill=black] (8.,0.) circle (1.5pt);
\draw [fill=black] (6.5,1.) circle (1.5pt);
\draw [fill=black] (7.5,1.) circle (1.5pt);
\draw [fill=black] (6.5,-1.) circle (1.5pt);
\draw [fill=black] (7.5,-1.) circle (1.5pt);
\draw[color=black] (4.,-1.3) node {Graph  $B_{4,3,6}$.};
\end{scriptsize}
\end{tikzpicture}
\end{align*}
For the family of generalized barbell graphs, the authors obtained in \cite{H.Lin}:
\vspace{-0.2cm}
\begin{theorem}[\cite{H.Lin}, Corollary 2.8]
Let $\sigma=\{\lambda_{1}, \cdots,\lambda_{k_{1}},\cdots,\lambda_{k_{1}+p},\cdots,\lambda_{k_{1}+p+k_{2}}\}$ be a set with $k_{1}+p+k_{2}$ real numbers, where $k_{1}\geq 2$ and $k_{2}\geq 2$. Then $\sigma$ is the spectrum of a matrix $A\in \mathcal{S}(B_{k_{1},p,k_{2}})$ ($A$ with the property SSP) if and only if $\sigma$ contains at least $p+4$ distinct elements.
\end{theorem}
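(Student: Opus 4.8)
\textbf{Necessity.} The plan is to locate a unique shortest path of length $p+3$ in $B_{k_1,p,k_2}$ and apply the classical powers argument. Pick $a\in V(K_{k_1})\setminus\{v\}$ and $b\in V(K_{k_2})\setminus\{w\}$. Since $v$ and $w$ are cut vertices separating the two cliques from the interior of the spine, every $a$--$b$ walk must use the edge $av$, then the whole path $v-x_1-\cdots-x_p-w$, then the edge $wb$; hence the shortest $a$--$b$ path is unique and has $p+4$ vertices. Consequently, for every $A\in\mathcal S(B_{k_1,p,k_2})$ we have $(A^{\ell})_{ab}=0$ for $\ell\le p+2$, whereas $(A^{p+3})_{ab}$ equals the product of the (nonzero) entries of $A$ along that path and hence is nonzero. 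Thus $A^{p+3}\notin\operatorname{span}\{I,A,\ldots,A^{p+2}\}$, the minimal polynomial of $A$ has degree at least $p+4$, and since $A$ is symmetric it has at least $p+4$ distinct eigenvalues; so $\sigma$ must contain at least $p+4$ distinct elements.

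\textbf{Sufficiency.} The idea is to use the decomposition $B_{k_1,p,k_2}=L_{k_1,p+1}\oplus_{w}K_{k_2}$, where $L_{k_1,p+1}$ is the lollipop formed by $K_{k_1}$ and the spine $P_{p+1}$ and $w$ is the free leaf of that spine, and to produce a realizing matrix as a vertex-sum at $w$ of pieces that carry the SSP. Given $\sigma$ with $t\ge p+4$ distinct values and total multiplicity $n=k_1+p+k_2$, I would: (i) split the prescribed data into a spectrum $\Lambda_1$ of size $k_1+p+1$ having at least $p+3$ distinct values and a spectrum $\Lambda_2$ of size $k_2$ having at least two distinct values, together with a choice of interface data $\sigma(M_1(w))$, $\sigma(M_2(w))$, the diagonal entry at $w$, and the sign of the bridge weight; (ii) realize $M_1\in\mathcal S(L_{k_1,p+1})$ with $\sigma(M_1)=\Lambda_1$ and the SSP by Corollary~2.6, and $M_2\in\mathcal S(K_{k_2})$ with $\sigma(M_2)=\Lambda_2$ (any multiset of $k_2$ reals with at least two distinct values is so realizable, and automatically with the SSP, since every matrix whose graph is a complete graph satisfies the SSP conditions vacuously); (iii) put $M=M_1\oplus_{w}M_2$, choose the interface data so that $\sigma(M_1(w))$ and $\sigma(M_2(w))$ are disjoint and strictly interlace $\Lambda_1$ and $\Lambda_2$ respectively, and exploit the vertex-sum identity $\phi_M=\phi_{M_1}\phi_{M_2(w)}+\phi_{M_1(w)}\phi_{M_2}-x\,\phi_{M_1(w)}\phi_{M_2(w)}$ to steer $\sigma(M)$ onto $\sigma$, checking that $M$ again has the SSP. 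The arithmetic $(p+3)+2-1=p+4$ is exactly why ``at least $p+4$ distinct values'' is the threshold making step (i) feasible.

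\textbf{Main obstacle.} Step (iii) is the crux: one must pick the overlap eigenvalues so that the two interlacings are strict and collision-free, arrange that the glued spectrum is \emph{exactly} $\sigma$ and not merely a spectrum with the correct number of distinct values, and carry the SSP through the vertex-sum --- the last point either via the standard principle that a vertex-sum of SSP matrices at a vertex whose two sides have disjoint, strictly interlacing eigenvalue data is again SSP, or by verifying the SSP conditions directly on the (explicit) matrix $M$. The tightest situation is the extremal one $t=p+4$, where there is no slack: the multiplicities of $\sigma$ must be laid out over the two cliques and the spine in the essentially unique admissible way, and one checks that Corollary~2.6 supplies a lollipop matrix with precisely the required ordered multiplicities; for $t>p+4$ the surplus distinct values leave room to absorb repeated eigenvalues into the cliques and the split in (i) is routine.
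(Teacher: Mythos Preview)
The paper does not prove this statement; it is quoted verbatim from \cite{H.Lin} as background in the Preliminaries and no argument is supplied here. So there is nothing in the present paper to compare your proposal against.

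On the proposal itself: your necessity argument is correct and is the standard one (unique shortest $a$--$b$ path of length $p+3$ forces the minimal polynomial to have degree at least $p+4$). Your sufficiency sketch, however, has a genuine gap that you yourself flag but do not close. Decomposing $B_{k_1,p,k_2}=L_{k_1,p+1}\oplus_w K_{k_2}$ and invoking Corollary~2.6 for the lollipop piece is reasonable, but the vertex-sum identity you write down does \emph{not} let you prescribe $\sigma(M)$ by independently choosing $\sigma(M_1)$ and $\sigma(M_2)$: the spectrum of $M_1\oplus_w M_2$ is not $\Lambda_1\cup\Lambda_2$ minus the overlap, and the polynomial relation you quote is a constraint, not a construction. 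To hit a \emph{given} $\sigma$ you would need to solve that constraint for the interface data and for the split $\Lambda_1,\Lambda_2$ simultaneously, and you give no mechanism for doing so (nor is the formula you wrote quite the standard one: the diagonal entry at $w$, not $x$, appears in the cross term, and the convention for how the two diagonals at $w$ combine must be fixed). The claim that gluing SSP pieces at a vertex with ``disjoint, strictly interlacing'' data yields an SSP matrix is also asserted without justification; this requires an argument (in \cite{H.Lin} it is handled via a specific liberation/perturbation framework, not by a black-box vertex-sum principle). As it stands, your sufficiency direction is an outline of a plausible strategy rather than a proof.
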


Consequently, many spectra are shown realizable for {\it block graphs}. A {\it block} of a graph is its maximal $2-$connected induced subgraph. A {\bf block graph} $G$ is a graph whose blocks are cliques (for more details on this structure, see \cite{H.Lin}). \\

For $A\in S_{n}(\mathbb{R})$, we denote by $q(A)$ the number of distinct eigenvalues of $A$. For a graph $G$, we define
\begin{align*}
q(G)=min\{q(A) \ : \ A\in \mathcal{S}(G)\}.
\end{align*}
A known result given in \cite{Ahmadi} is the following:
\begin{proposition}{\rm\cite{Ahmadi}}\label{prop1}
Suppose $G$ is obtained from $K_{n}$ by deleting a single edge $e$. Then
\begin{align*}
q(G)=\begin{cases}
1, \ \ if \ \ n = 2; \\
3, \ \ if \ \ n = 3; \\
2, \ \ otherwise.
\end{cases}
\end{align*}
\end{proposition}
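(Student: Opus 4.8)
The plan is to treat the three ranges of $n$ separately, the only substantive one being $n\ge 4$, where the point is to realize $q(G)=2$ by producing a matrix in $\mathcal S(G)$ with exactly two distinct eigenvalues, obtained as an affine image of a carefully chosen rank-$2$ orthogonal projection. For $n=2$, deleting the unique edge of $K_2$ leaves two isolated vertices, so $\mathcal S(G)$ is the set of all $2\times 2$ diagonal matrices and $I_2$ witnesses $q(G)=1$. For $n=3$, $G$ is the path $P_3$, and every $A\in\mathcal S(P_3)$ is an irreducible symmetric tridiagonal matrix; the classical argument (an eigenvector whose first coordinate vanishes is forced to be $0$ by the three-term recurrence) shows that each such $A$ has three simple eigenvalues, hence $q(A)=3$ for all $A\in\mathcal S(P_3)$ and $q(G)=3$.

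Now assume $n\ge 4$ and label the deleted edge as $\{n-1,n\}$. For the lower bound, if $q(A)=1$ then $A$ is scalar and $\mathcal G(A)$ is edgeless, while $G=K_n-e$ has edges, so $q(G)\ge 2$. For the upper bound, recall that a symmetric matrix with two distinct eigenvalues is, up to an affine map $t\mapsto \alpha t+\beta$, an orthogonal projection; since the diagonal of a matrix is irrelevant to its graph and $\mathcal G(2P-I)=\mathcal G(P)$, it suffices to construct a rank-$2$ orthogonal projection $P\in S_n(\mathbb R)$ with $P_{ij}\ne 0$ exactly when $\{i,j\}\in E(G)$. Writing $P=VV^{\top}$ with $V$ an $n\times 2$ matrix whose columns are orthonormal and whose $i$-th row is $\rho_i\in\mathbb R^2$, one has $P_{ij}=\rho_i^{\top}\rho_j$ and the single constraint $V^{\top}V=I_2$, i.e. $\sum_{i=1}^{n}\rho_i\rho_i^{\top}=I_2$. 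So the problem becomes: choose nonzero vectors $\rho_1,\dots,\rho_n\in\mathbb R^2$ forming a tight frame for $\mathbb R^2$, with $\rho_{n-1}^{\top}\rho_n=0$ and $\rho_i^{\top}\rho_j\ne 0$ for every other pair $i\ne j$.

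To do this, put $\rho_{n-1}=(s,0)$ and $\rho_n=(0,t)$ with $0<s,t<1$; what remains is to choose $\rho_1,\dots,\rho_{n-2}$ with all coordinates nonzero, all pairwise inner products nonzero, and $\sum_{i=1}^{n-2}\rho_i\rho_i^{\top}=\mathrm{diag}(1-s^2,1-t^2)$. This is solvable precisely because $n-2\ge 2$: substituting $\rho_i=\mathrm{diag}(\sqrt{1-s^2},\sqrt{1-t^2})\,\eta_i$ reduces it to exhibiting a tight frame $\eta_1,\dots,\eta_{n-2}$ for $\mathbb R^2$ in general position, and the conditions ``$\eta_i\ne 0$'' and ``$\eta_i^{\top}\eta_j\ne 0$'' are open and dense on the positive-dimensional variety of such frames (whereas for $n=3$ there is only one free vector, which is why $q(P_3)=3$). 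Concretely, for $n=4$ one may take $\rho_1=\tfrac1{10}(1,1)$, $\rho_2=\tfrac1{20}(4,-1)$, $\rho_3=(\sqrt{19/20},0)$, $\rho_4=(0,\sqrt{79/80})$, for which $\sum_i\rho_i\rho_i^{\top}=I_2$, $\rho_3^{\top}\rho_4=0$, and all other inner products are nonzero. The matrix $A=2P-I$ then has eigenvalue $1$ with multiplicity $2$ and eigenvalue $-1$ with multiplicity $n-2\ge 2$, so $q(A)=2$ and $A\in\mathcal S(G)$; with the lower bound this gives $q(G)=2$.

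The main obstacle is exactly the construction in the last step. The naive, symmetric attempt — looking for a rank-$2$ projection invariant under swapping vertices $n-1$ and $n$ — collapses (it forces a sum of squares of the required nonzero off-diagonal entries to vanish), so one is obliged to commit to an asymmetric projection and then argue, either via the genericity of general-position tight frames or via an explicit formula such as the one above, that the prescribed zero/nonzero pattern of $P$ can actually be attained while keeping $P$ idempotent.
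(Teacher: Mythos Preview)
The paper does not give a proof of this proposition; it is quoted from \cite{Ahmadi} and invoked only once, in the proof of Corollary~3.1. So there is no in--paper argument to compare against, and your write-up stands on its own.

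Your treatment of $n=2$ and $n=3$ is correct and standard, and for $n\ge 4$ the idea of realizing a two-eigenvalue matrix as an affine shift of a rank-$2$ orthogonal projection $P=VV^{\top}$, with rows $\rho_i$ chosen so that exactly the pair $\rho_{n-1},\rho_n$ is orthogonal, is a clean and correct approach. Your explicit $n=4$ example checks out in every detail.

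There is one genuine slip in the general step. After the substitution $\rho_i=D\eta_i$ with $D=\mathrm{diag}(\sqrt{1-s^{2}},\sqrt{1-t^{2}})$, the condition $\rho_i^{\top}\rho_j\ne 0$ becomes $\eta_i^{\top}D^{2}\eta_j\ne 0$, not $\eta_i^{\top}\eta_j\ne 0$. This is not cosmetic at $n=4$: a two-element Parseval frame in $\mathbb{R}^{2}$ is necessarily an orthonormal basis, so $\eta_1^{\top}\eta_2=0$ identically and your density claim, as literally stated, is false there. Your explicit example rescues the $n=4$ case (note that it has $s\ne t$, so $D^{2}$ is not scalar and $\eta_1^{\top}D^{2}\eta_2$ can indeed be nonzero). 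For $n\ge 5$ the genericity argument is fine once you either write the correct condition $\eta_i^{\top}D^{2}\eta_j\ne 0$ or, more simply, skip the substitution and argue directly on the variety $\{(\rho_1,\dots,\rho_{n-2}):\sum_i\rho_i\rho_i^{\top}=\mathrm{diag}(1-s^{2},1-t^{2})\}$, where each of the finitely many bad loci $\{\rho_i^{\top}\rho_j=0\}$ and $\{(\rho_i)_k=0\}$ is a proper analytic subset. With that correction the proof is complete.
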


In this study the following results will play a crucial role:
\begin{lemma}{\rm \cite{Bol2}}\label{L1}
Let $\{\lambda_{1}, \lambda_{2},\cdots,\lambda_{n}\}$ and $\{\mu_{1},\mu_{2},\cdots,\mu_{n-1}\}$ be two sets of real numbers such that
    \begin{align}\label{Id2}
    \lambda_{1}\geq \mu_{1}\geq \lambda_{2}\geq \mu_{2}\geq \cdots \geq \mu_{n-1}\geq \lambda_{n},
    \end{align}
where $\mu_{i}\neq \mu_{j}$ for all $i\neq j$. Then there exists an $n\times n$ bordered matrix \
$A = \begin{bmatrix}
A_{1} & \mathbf{b} \\
\mathbf{b}^{T} & a
\end{bmatrix}$ \
with eigenvalues $\{\lambda_{1}, \lambda_{2},\cdots,\lambda_{n}\}$, where $A_{1}=diag\{\mu_{1},\mu_{2},\cdots,\mu_{n-1}\}$
and $\mathbf{b}=\begin{bmatrix} b_{1} \ \cdots \ b_{n-1}\end{bmatrix}^{T}$ is such that
\begin{align*}
b^{2}_{i}=-\frac{\displaystyle\prod_{j=1}^{n}(\mu_{i}-\lambda_{j})}{\displaystyle\prod_{\substack{j=1 \\ j\neq i}}^{n-1}(\mu_{i}-\mu_{j})}, \quad i=1,\ldots,n-1.
\end{align*}
\end{lemma}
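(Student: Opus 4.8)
The plan is to compute the characteristic polynomial of the bordered matrix explicitly and then read off its entries by a partial‑fraction expansion. Write $A=\begin{bmatrix} A_{1} & \mathbf{b} \\ \mathbf{b}^{T} & a\end{bmatrix}$ with $A_{1}=\mathrm{diag}\{\mu_{1},\ldots,\mu_{n-1}\}$ and with $a$ and $b_{1},\ldots,b_{n-1}$ still to be chosen. Expanding $\det(xI_{n}-A)$ by the Schur complement of the block $xI_{n-1}-A_{1}$ (valid for $x\notin\{\mu_{1},\ldots,\mu_{n-1}\}$ and then extending to all $x$ by polynomial identity) gives
\[
\det(xI_{n}-A)=\Bigl(\prod_{i=1}^{n-1}(x-\mu_{i})\Bigr)\Bigl(x-a-\sum_{i=1}^{n-1}\frac{b_{i}^{2}}{x-\mu_{i}}\Bigr).
\]
Thus the requirement $\sigma(A)=\{\lambda_{1},\ldots,\lambda_{n}\}$ is equivalent to the identity of rational functions
\[
x-a-\sum_{i=1}^{n-1}\frac{b_{i}^{2}}{x-\mu_{i}}=\frac{\prod_{j=1}^{n}(x-\lambda_{j})}{\prod_{i=1}^{n-1}(x-\mu_{i})}.
\]

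Next I would expand the right‑hand side in partial fractions. Since the $\mu_{i}$ are pairwise distinct the denominator has only simple poles, and since the numerator has degree exactly one more than the denominator the expansion takes the form
\[
\frac{\prod_{j=1}^{n}(x-\lambda_{j})}{\prod_{i=1}^{n-1}(x-\mu_{i})}=(x-c)+\sum_{i=1}^{n-1}\frac{r_{i}}{x-\mu_{i}},\qquad r_{i}=\frac{\prod_{j=1}^{n}(\mu_{i}-\lambda_{j})}{\prod_{k\neq i}(\mu_{i}-\mu_{k})},
\]
with $c=\sum_{j}\lambda_{j}-\sum_{i}\mu_{i}$, obtained by comparing the $x^{n-1}$ coefficients after clearing denominators (or from the behaviour as $x\to\infty$). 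Matching this with the previous display forces $a:=c$ and $b_{i}^{2}:=-r_{i}$; conversely, once $a$ and the $b_{i}$ are defined this way the two rational functions coincide, hence (clearing the common denominator) the two monic polynomials of degree $n$ coincide, so indeed $\det(xI_{n}-A)=\prod_{j=1}^{n}(x-\lambda_{j})$.

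The one point that genuinely uses the hypothesis — and the step I expect to need the most careful bookkeeping — is showing $-r_{i}\geq 0$, so that a real $b_{i}$ with $b_{i}^{2}=-r_{i}$ exists. Here the interlacing inequalities $\lambda_{1}\geq\mu_{1}\geq\lambda_{2}\geq\cdots\geq\mu_{n-1}\geq\lambda_{n}$ enter: they give $\lambda_{j}\geq\mu_{i}$ for $j\leq i$ and $\lambda_{j}\leq\mu_{i}$ for $j\geq i+1$, so the numerator $\prod_{j=1}^{n}(\mu_{i}-\lambda_{j})$ is a product of $i$ factors that are $\leq 0$ and $n-i$ factors that are $\geq 0$, hence has sign $(-1)^{i}$ times a nonnegative quantity. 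Since the $\mu_{i}$ are strictly decreasing, the denominator $\prod_{k\neq i}(\mu_{i}-\mu_{k})$ has $i-1$ negative factors and $n-1-i$ positive ones, hence sign $(-1)^{i-1}$. Therefore $r_{i}\leq 0$ and we may take $b_{i}:=\sqrt{-r_{i}}$ (possibly $b_i=0$, precisely when some $\mu_{i}$ equals some $\lambda_{j}$). I would close by noting that the only subtlety is the tie cases in these sign counts, i.e. when an interlacing inequality is an equality; but such a coincidence merely makes the corresponding $r_{i}$ vanish, which is harmless, and this completes the construction of $A$.
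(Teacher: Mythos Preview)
Your argument is correct and is in fact the standard derivation of this classical result. Note, however, that the paper does not supply its own proof of this lemma: it is stated in the Preliminaries section with a citation to Boley and Golub \cite{Bol2} and is used as a black box. So there is no ``paper's proof'' to compare against here; your Schur-complement / partial-fraction computation is essentially the textbook argument, and the sign-counting step you flagged as the delicate point is handled correctly, including the degenerate cases where some interlacing inequality is an equality.
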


\begin{remark}\label{Rem1}
Note that Lemma \ref{L1} answers Problem \ref{Q1} when G is a star, if all the inequalities in \eqref{Id2} are strict.
\end{remark}

\begin{lemma}{\rm \cite{smi}}\label{smi1}
Let $B\in S_{k}(\mathbb{R})$ be a matrix with eigenvalues $\mu_{1},\ldots,\mu_{k}$ and let $\mathbf{u}$ be an eigenvector corresponding to $\mu_{1}$, normalized so that $\mathbf{u}^{T}\mathbf{u}=1$. Let \
$A=\begin{bmatrix}
A_{1} & \mathbf{b} \\
\mathbf{b}^{T} & \mu_{1}
\end{bmatrix}\in S_{n}(\mathbb{R}) $ \ be a matrix with a diagonal element $\mu_{1}$ and eigenvalues $\lambda_{1},\ldots,\lambda_{n}$. Then the matrix \
$C=\begin{bmatrix}
A_{1} & \mathbf{b}\mathbf{u}^{T} \\
\mathbf{u}\mathbf{b}^{T} & B
\end{bmatrix}\in S_{n+k-1}(\mathbb{R})$, \ has eigenvalues $\lambda_{1},\ldots,\lambda_{n},\mu_{2},\ldots,\mu_{k}$.
\end{lemma}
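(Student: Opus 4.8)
The plan is to exhibit $C$ as orthogonally similar to a direct sum $A\oplus B'$, where $B'\in S_{k-1}(\mathbb{R})$ is a symmetric matrix carrying exactly the ``leftover'' eigenvalues $\mu_2,\ldots,\mu_k$ of $B$. Since orthogonal similarity preserves the spectrum and the spectrum of a block-diagonal matrix is the union of the spectra of its blocks, this at once gives $\sigma(C)=\sigma(A)\cup\sigma(B')=\{\lambda_1,\ldots,\lambda_n,\mu_2,\ldots,\mu_k\}$.

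To produce the similarity I would first extend $\mathbf{u}$ to an orthonormal basis of $\mathbb{R}^{k}$, i.e.\ choose an orthogonal matrix $Q$ of order $k$ whose first column is $\mathbf{u}$; this is possible precisely because $\mathbf{u}^{T}\mathbf{u}=1$. Because $B$ is symmetric and $B\mathbf{u}=\mu_{1}\mathbf{u}$, the orthogonal complement $\mathbf{u}^{\perp}$ is $B$-invariant, so
\[
Q^{T}BQ=\begin{bmatrix}\mu_{1} & \mathbf{0}^{T}\\ \mathbf{0} & B'\end{bmatrix},\qquad B'\in S_{k-1}(\mathbb{R}),
\]
and comparing characteristic polynomials, $(x-\mu_{1})\det(xI_{k-1}-B')=\det(xI_{k}-B)$, shows that $\sigma(B')=\{\mu_{2},\ldots,\mu_{k}\}$. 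Orthonormality of the columns of $Q$ also yields $\mathbf{u}^{T}Q=\mathbf{e}_{1}^{T}$, the first standard row vector of $\mathbb{R}^{k}$, and hence $Q^{T}\mathbf{u}=\mathbf{e}_{1}$.

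Now set $\widetilde{Q}=I_{n-1}\oplus Q$, an orthogonal matrix of order $n+k-1$, and compute
\[
\widetilde{Q}^{T}C\,\widetilde{Q}=\begin{bmatrix} A_{1} & \mathbf{b}\,\mathbf{u}^{T}Q\\ Q^{T}\mathbf{u}\,\mathbf{b}^{T} & Q^{T}BQ\end{bmatrix}=\begin{bmatrix} A_{1} & \mathbf{b} & O\\ \mathbf{b}^{T} & \mu_{1} & \mathbf{0}^{T}\\ O & \mathbf{0} & B'\end{bmatrix},
\]
where I have used $\mathbf{b}\,\mathbf{u}^{T}Q=\mathbf{b}\,\mathbf{e}_{1}^{T}$ and $Q^{T}\mathbf{u}\,\mathbf{b}^{T}=\mathbf{e}_{1}\mathbf{b}^{T}$ to see that all coupling with the last $k-1$ coordinates vanishes. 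The top-left $n\times n$ block is exactly $A=\left[\begin{smallmatrix}A_{1}&\mathbf{b}\\ \mathbf{b}^{T}&\mu_{1}\end{smallmatrix}\right]$, so $\widetilde{Q}^{T}C\widetilde{Q}=A\oplus B'$, and the claim follows. The only step that needs genuine care — and the crux of the argument — is the middle one: verifying that symmetry of $B$ forces $\mathbf{u}^{\perp}$ to be invariant so that $Q^{T}BQ$ has the displayed block-diagonal form with $\sigma(B')=\{\mu_{2},\ldots,\mu_{k}\}$; once that is in hand, the rest is routine block-matrix bookkeeping together with the spectral invariance of orthogonal conjugation.
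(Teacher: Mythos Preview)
Your argument is correct. The paper does not supply its own proof of this lemma; it is quoted from \v{S}migoc's work \cite{smi} as a known tool, so there is no in-paper proof to compare against. Your approach---conjugating by $I_{n-1}\oplus Q$ with $Q$ an orthogonal matrix having $\mathbf{u}$ as first column, and using the $B$-invariance of $\mathbf{u}^{\perp}$ to block-diagonalize---is the standard one and is essentially the argument given in the original reference.
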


\section{Main results}
In this section we present our main results. Before, we need an important result, which will play a crucial role. We begin by considering the following nonsingular matrix:
\begin{align}\label{I1}
R=\left[
\begin{array}{ccccc}
1 & 1 & \cdots & 1 & 1 \\
1 & 1 & \cdots & 1 & -1 \\
\vdots & \vdots & \cdots & -2 & 0 \\
1 & 1 & \ddots & \ddots & \vdots \\
1 & -(n-1) & 0 & \cdots & 0%
\end{array}.%
\right]
\end{align}

In the next result, we construct a matrix in $S(K_{n})$ with prescribed spectrum $\Lambda =\{\lambda
_{1},\lambda _{2},\ldots ,\lambda _{n}\}$ satisfying $\lambda _{1}\neq \lambda_{2}$, which satisfies some important properties:
\begin{theorem}\label{Rob1} Let $\Lambda =\{\lambda _{1},\lambda _{2},\ldots ,\lambda _{n}\}$
be a list of real numbers with at least $2$ distinct elements. Then, there exists a matrix $A=(a_{ij})$ of order $n\times n$, which satisfies:
\begin{itemize}
\item[i.] $\sigma(A)=\Lambda$
\item[ii.] $A\in \mathcal{S}(K_{n})$
\item[iii.] $A\in \mathcal{CS}_{\lambda_{1}}$. Furthermore, for all $k=2,\ldots,n$, $(\lambda_{k}, {\bf x}_{k})$ is a eigenpair for $A$, being
\begin{align*}
{\bf x}^{T}_{k}=(1,1,\ldots ,1,-(n-k+1),\underset{(k-2)\text{ zeros}}{\underbrace{0,\ldots ,0}}).
\end{align*}
\item[iv.] $a_{nn}=\left(\frac{1}{n}\right)(\lambda_{1}-\lambda_{2})+\lambda_{2}$.
\end{itemize}
\end{theorem}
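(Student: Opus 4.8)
The plan is to build the matrix $A$ explicitly from the list $\Lambda$ by a similarity transformation using the fixed nonsingular matrix $R$ from \eqref{I1}, and then verify the four claimed properties by direct computation. First I would note that since $\Lambda$ has at least two distinct elements, we may assume (after relabeling) that $\lambda_1 \neq \lambda_2$. Set $D = \mathrm{diag}\{\lambda_1,\lambda_2,\ldots,\lambda_n\}$. The columns of $R$, read from the last to the first, are precisely the vectors $\mathbf{x}_n, \mathbf{x}_{n-1},\ldots,\mathbf{x}_2$ together with the all-ones vector $\mathbf{1}$ in the first column; one checks these $n$ vectors are linearly independent (this is the nonsingularity of $R$ asserted in the text). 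The idea is to define $A$ so that $R^{-1}$ (or $R$) diagonalizes it with the eigenvalue $\lambda_k$ attached to the column $\mathbf{x}_k$ and $\lambda_1$ attached to $\mathbf{1}$. Concretely, I would set $A = R\,\widetilde{D}\,R^{-1}$ where $\widetilde{D}$ is the diagonal matrix whose entries match the eigenvalues to the corresponding columns of $R$ (so the first diagonal entry is $\lambda_1$, and the entry in position $k$ corresponds to $\lambda_{n-k+2}$, matching the column ordering). This immediately gives property (i), $\sigma(A) = \Lambda$, and property (iii), since by construction $A\mathbf{x}_k = \lambda_k \mathbf{x}_k$ for $k=2,\ldots,n$, and $A\mathbf{1} = \lambda_1 \mathbf{1}$, so $A$ has constant row sums $\lambda_1$, i.e. $A \in \mathcal{CS}_{\lambda_1}$ (the class of matrices with constant row sum $\lambda_1$).

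The main work — and the main obstacle — is verifying property (ii), namely that the off-diagonal entries $a_{ij}$ are all nonzero, so that $\mathcal{G}(A) = K_n$. This requires computing $R^{-1}$ explicitly and then extracting a closed form for the entries of $A = R\widetilde{D}R^{-1}$. The matrix $R$ has a staircase structure, so $R^{-1}$ should also admit a clean closed form (its rows involve differences of reciprocals of consecutive triangular-type numbers). I expect that each off-diagonal entry $a_{ij}$ turns out to be a positive rational combination of the differences $\lambda_k - \lambda_1$ (or telescoping sums thereof), so that $a_{ij}\neq 0$ will follow from $\lambda_1\neq \lambda_2$ together with the way the staircase pattern forces at least one such difference to appear with a nonzero coefficient. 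This is the step where a careful but routine computation is unavoidable; the key structural fact to exploit is that consecutive columns of $R$ differ in a single controlled place, which propagates to a near-triangular structure in $R^{-1}$ and hence a predictable sign pattern in $A$.

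Finally, property (iv) is a direct corollary of the explicit formula for $A$: the entry $a_{nn}$ is obtained by pairing the last row of $R$ with the last column of $R^{-1}$, weighted by $\widetilde D$. Since the last row of $R$ is $(1,-(n-1),0,\ldots,0)$, only two entries of the relevant column of $R^{-1}$ contribute, and the computation collapses to a two-term expression. I would check that it simplifies to $\tfrac{1}{n}(\lambda_1-\lambda_2)+\lambda_2$, which is consistent with the fact that $a_{nn}$ must lie strictly between $\lambda_1$ and $\lambda_2$ (it is a nontrivial convex combination), as it should since $A$ is a symmetric matrix with those among its eigenvalues and the diagonal entries interlace appropriately. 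As a sanity check one can confirm $\sum_i a_{ii} = \mathrm{tr}(A) = \sum_k \lambda_k$ once all diagonal entries are computed by the same method.
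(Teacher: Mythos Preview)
Your overall strategy is the same as the paper's---define $A=RDR^{-1}$ and read off the properties---but two steps are genuinely incomplete.

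First, you never verify that $A$ is symmetric. Since $R$ is not orthogonal, $RDR^{-1}$ is not symmetric a priori, and membership in $\mathcal{S}(K_n)$ requires it. The paper computes $R^{-1}$ explicitly and checks $a_{ij}=a_{ji}$ by hand, discovering along the way the useful fact that $a_{i,j}=a_{1,j}$ for all $1\le i<j$. This is not difficult once the formulas are written down, but it is not automatic and should not be skipped.

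Second, and more importantly, your expectation that $a_{ij}\neq 0$ ``will follow from $\lambda_1\neq\lambda_2$'' is wrong. The paper shows that for $i<j$,
\[
a_{i,j}=\frac{\lambda_1-\lambda_2}{n}+\sum_{s=j}^{n-1}\frac{\lambda_{n-s+1}-\lambda_{n-s+2}}{s},
\]
and the coefficients in this telescoping sum have \emph{mixed} signs, not all positive as you conjecture. When $\Lambda$ has three or more distinct values, the sum can exactly cancel the leading term $(\lambda_1-\lambda_2)/n$, producing a zero off-diagonal entry. The paper's remedy is to use the freedom to permute $\lambda_3,\ldots,\lambda_n$ on the diagonal of $D$: there are only finitely many bad equalities to avoid, so some permutation works. (When $\Lambda$ has exactly two distinct values, a separate short argument handles it.) Without this permutation step your construction can fail to land in $\mathcal{S}(K_n)$.

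A minor point: your reordered $\widetilde D$ is unnecessary. The $k$-th column of $R$ is already $\mathbf{x}_k$ (with the first column equal to $\mathbf 1$), so $D=\mathrm{diag}(\lambda_1,\ldots,\lambda_n)$ gives $A\mathbf{x}_k=\lambda_k\mathbf{x}_k$ directly.
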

\begin{proof}\
$i.$ Let $D=diag\{\lambda _{1},\lambda _{2},\ldots,\lambda _{n}\}$, where $\lambda _{1}\neq \lambda _{2}$. Let us define the matrix
\begin{align*}
A=RDR^{-1}
\end{align*}
where $R$ is as in \eqref{I1}. Then, it is clear that $\sigma(A)=\Lambda$.
\\ \\
$ii.$ \
We now show that $A\in \mathcal{S}(K_{n})$. First, we show that $A$ is symmetric. In fact, from \eqref{I1} we obtain that the rows $\mathbf{G}_{i}$ of $R$ are defined by
\begin{eqnarray*}
\mathbf{G}_{1} &=&(1,1,\ldots ,1) \\ \\
\mathbf{G}_{i} &=&(1,1,\ldots ,1,-(i-1),\underset{(i-2)\text{ zeros}}{%
\underbrace{0,\ldots ,0}}) ,\text{ }i=2,\ldots ,n,
\end{eqnarray*}
while the columns $\mathbf{H}_{j}$ of $R^{-1}=\left[ \mathbf{H}_{1}\mid
\cdots \mid \mathbf{H}_{n}\right] $ are defined by%
\begin{eqnarray*}
\mathbf{H}_{1}^{T} &=&\left(\frac{1}{n},\frac{1}{n(n-1)},\ldots ,\frac{1}{%
3\cdot 2},\frac{1}{2\cdot 1}\right) \\ \\
\mathbf{H}_{j}^{T} &=&\left( \frac{1}{n},\frac{1}{n(n-1)},\ldots ,\frac{1}{%
(j+1)j},-\frac{1}{j},\underset{(j-2)\text{ zeros}}{\underbrace{0,\ldots ,0}}%
\right) ,\text{ }j=2,\ldots ,n.
\end{eqnarray*}
Then, for $i,j=1,2,\ldots,n$,
\begin{equation*}
a_{i,j}=(RDR^{-1})_{i,j}=\mathbf{e}_{i}^{T}(RDR^{-1})\mathbf{e}_{j}=%
\mathbf{G}_{i}D\mathbf{H}_{j}.
\end{equation*}%
Thus, when $i=1$ and $j=2,\ldots ,n$, we have
\begin{align}\label{sim1}
a_{1,j} &= \mathbf{G}_{1}D\mathbf{H}_{j} = \left(\lambda _{1},\lambda _{2},\ldots,\lambda _{n}\right)
\mathbf{H}_{j} \notag \\ \notag \\
&= \frac{\lambda _{1}}{n}+\frac{\lambda _{2}}{n(n-1)}+\cdots +
\frac{\lambda_{n-(j-1)}}{(j+1)j}-\frac{\lambda_{n-(j-2)}}{j} \\ \notag \\
&=\mathbf{G}_{j}D\mathbf{H}_{1}=a_{j,1}. \notag
\end{align}
Similarly, when $2\leq i<j\leq n$, we have
\begin{align}\label{sim2}
a_{i,j} &= \mathbf{G}_{i}D\mathbf{H}_{j} = (\lambda_{1}, \lambda_{2}, \ldots, \lambda_{n-(i-1)}, -(i-1)\lambda_{n-(i-2)},
\underset{(i-2)\text{ zeros}}{\underbrace{0, \ldots, 0}})\mathbf{H}_{j} \notag \\
&= \ \frac{\lambda_{1}}{n}+\frac{\lambda_{2}}{n(n-1)}+\cdots +
\frac{\lambda_{n-(j-1)}}{(j+1)j}-\frac{\lambda_{n-(j-2)}}{j}
\\ \notag \\ &= \ (\lambda_{1}, \lambda_{2}, \ldots, \lambda_{n-(j-1)}, -(j-1)\lambda_{n-(j-2)},
\underset{(j-2)\text{ zeros}}{\underbrace{0, \ldots, 0}})\mathbf{H}_{i} \notag \\
&= \ \mathbf{G}_{j}D\mathbf{H}_{i} \ = \ a_{j,i}. \notag
\end{align}
Thus, we have showed that $A$ is symmetric. On the other hand, from \eqref{sim1} and \eqref{sim2}, we can conclude that
\begin{align}\label{sim3}
a_{1,j}=a_{i,j}, \quad \text{for all} \quad 2\leq i<j\leq n.
\end{align}
Note also that \eqref{sim1} can be written by
\begin{align*}
a_{1,j} &= \frac{\lambda_{1}-\lambda_{2}}{n}+\frac{\lambda_{2}-\lambda_{3}}{n-1}+\cdots +
\frac{\lambda_{n-j}-\lambda_{n-(j-1)}}{j+1}+\frac{\lambda_{n-(j-1)}-\lambda_{n-(j-2)}}{j} \\ \\
&=a_{1,n}+\varphi(j);
\end{align*}
where
\begin{align*}
a_{1,n}=\frac{\lambda_{1}-\lambda_{2}}{n}\neq 0 \quad \ \text{and} \ \quad \varphi(j)=\sum\limits_{s=j}^{n-1}\frac{\lambda_{n-(s-1)}-\lambda_{n-(s-2)}}{s}, \ j=2,\ldots ,n-1.
\end{align*}
Note that if $\lambda_{i}\in \{\lambda_{1}, \lambda_{2}\}$, for all $i=3,\ldots ,n$ (at this case, $\Lambda$ has only $2$ distinct elements), then $\varphi(j)=\alpha a_{1,n}$, for some $\alpha\in \mathbb{R}-\{-1\}$, for all $j=2,\ldots ,n-1$. Therefore, $a_{1,j}=\beta a_{1,n}$, for some $\beta\in \mathbb{R}-\{0\}$, for all $j=2,\ldots ,n-1$. Thus, by \eqref{sim3}, $A\in S(K_{n})$.

Let us then assume that $\Lambda$ has at least three distinct elements. At this case, it is suffice to choose any permutation $\phi$ of the set $\{2,\ldots,n-1\}$ for which
\[\varphi(j)=\sum\limits_{s=j}^{n-1}\frac{\lambda_{n-(\phi(s)-1)}-\lambda_{n-(\phi(s)-2)}}{s}\neq -a_{1,n}, \quad \text{for all} \quad j=2,\ldots ,n-1.\] Thus, $a_{1,j}=a_{1,n}+\varphi(j)\neq 0$, for all $j=2,\ldots ,n-1$ and by \eqref{sim3}, $A\in S(K_{n})$.
\\ \\
$iii.$ \ Since $R{\bf e}_{1}={\bf e}$, \ $R^{-1}{\bf e}={\bf e}_{1}$. Therefore,
\begin{align*}
A{\bf e}=(RDR^{-1}){\bf e}=RD{\bf e}_{1}=\lambda_{1}R{\bf e}_{1}=\lambda_{1}{\bf e}.
\end{align*}
Thus, $A\in \mathcal{CS}_{\lambda_{1}}$. Similarly, we observe that $R{\bf e}_{k}={\bf x}_{k}$, where
\begin{align*}
{\bf x}^{T}_{k}=(1,1,\ldots ,1,-(n-k+1),\underset{(k-2)\text{ zeros}}{\underbrace{0,\ldots ,0}}), \quad k=2,\ldots,n.
\end{align*}
Hence, $R^{-1}{\bf x}_{k}={\bf e}_{k}$. Therefore,
\begin{align*}
A{\bf x}_{k}=(RDR^{-1}){\bf x}_{k}=RD{\bf e}_{k}=\lambda_{k}R{\bf e}_{k}=\lambda_{k}{\bf x}_{k}.
\end{align*}
Thus, $(\lambda_{k}, {\bf x}_{k})$ is a eigenpair for $A$.
\\ \\
$iv.$ \ \begin{align*}
a_{n,n}=\mathbf{G}_{n}D\mathbf{H}_{n}&=(\lambda_{1},-(n-1)\lambda_{2},\underset{(n-2)\text{ zeros}}{%
\underbrace{0,\ldots ,0}})\mathbf{H}_{n} \\
&=\frac{\lambda_{1}}{n}+\frac{(n-1)\lambda_{2}}{n} \\
&=\left(\frac{1}{n}\right)(\lambda_{1}-\lambda_{2})+\lambda_{2}.
\end{align*}
The proof is complete.
\end{proof}

\begin{theorem}\label{main1}
Let $G$ be a graph of order $n$ having a clique of order $k$ and a cluster $(C,S)$, where $\vert C\vert=n-k$ and $\vert S\vert=r\leq k$. For any list $\Lambda =\{\lambda _{1},\lambda _{2},\ldots ,\lambda _{n}\}$ of real numbers with at least $n-k+1$ distinct elements such that at least two of them have algebraic multiplicity greater than or equal to $2$, there exists a matrix $M\in \mathcal{S}(G)$ with spectrum $\Lambda$.
\end{theorem}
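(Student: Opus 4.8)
The plan is to split $\Lambda$ into two pieces, realise one piece on a star graph and the other on the clique, and glue the two matrices together with Lemma~\ref{smi1}. Write $V(G)=C\cup V(K_{k})$, where $C$ is the independent set of the cluster ($|C|=n-k$), $S\subseteq V(K_{k})$ with $|S|=r$, and every vertex of $C$ is adjacent to precisely the vertices of $S$; order the vertices of $K_{k}$ so that those of $S$ come first. One first notes that the hypotheses are vacuous unless $k\geq 3$ (they force $n-k+1\leq q\leq n-2$, where $q$ is the number of distinct values of $\Lambda$) and that $n-k\geq 2$. Choose $\Lambda_{A}$ to be a set of $n-k+1$ \emph{distinct} values occurring in $\Lambda$ --- possible by the distinctness hypothesis --- and put $\Lambda_{B}'=\Lambda\setminus\Lambda_{A}$, a multiset of size $k-1$. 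The single place where the hypothesis ``at least two values of multiplicity $\geq 2$'' is used is the elementary observation that $\Lambda_{B}'$ still contains at least two distinct values: whichever $n-k+1$ distinct values were removed, at least two of the high-multiplicity values survive in $\Lambda_{B}'$, either untouched or with a leftover copy.

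For the star part, sort $\Lambda_{A}$ as $a_{1}>\cdots>a_{n-k+1}$, pick any $\nu_{1}>\cdots>\nu_{n-k}$ with $a_{i}>\nu_{i}>a_{i+1}$, and apply Lemma~\ref{L1} with eigenvalues $\Lambda_{A}$ and diagonal part $\operatorname{diag}(\nu_{1},\ldots,\nu_{n-k})$. Strict interlacing makes each $b_{i}^{2}$ in Lemma~\ref{L1} strictly positive, so the bordered matrix $A$ has all border entries nonzero, $\mathcal G(A)=S_{n-k+1}$ (with the last vertex as centre), and corner entry $\mu_{1}:=\sum\Lambda_{A}-\sum_{i}\nu_{i}$, a number that runs over an open interval as the $\nu_{i}$ vary. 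For the clique part, apply Theorem~\ref{Rob1} to the list $\Lambda_{B}:=\Lambda_{B}'\cup\{\mu_{1}\}$ (it has $\geq 2$ distinct values), ordering the list so that $\mu_{1}$ occupies position $k-r+2$ while the first two entries are distinct --- possible because $\Lambda_{B}'$ has $\geq 2$ distinct values. By Theorem~\ref{Rob1}(iii) the resulting $B\in\mathcal S(K_{k})$ has $\mu_{1}$ as an eigenvalue with eigenvector $\mathbf x=(1,\ldots,1,-(r-1),0,\ldots,0)^{T}$ (with $r-1$ ones followed by $k-r$ zeros), supported on exactly the first $r$ coordinates --- that is, on $S$ --- with all those entries nonzero; this is where $r\geq 2$ matters. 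Normalising $\mathbf u=\mathbf x/\|\mathbf x\|$ and feeding $A$, $B$, $\mathbf u$, $\mu_{1}$ into Lemma~\ref{smi1} produces $M=\begin{bmatrix}\operatorname{diag}(\nu_{i}) & \mathbf b\mathbf u^{T}\\ \mathbf u\mathbf b^{T} & B\end{bmatrix}$, of order $(n-k)+k=n$, with $\sigma(M)=\Lambda_{A}\sqcup(\Lambda_{B}\setminus\{\mu_{1}\})=\Lambda_{A}\sqcup\Lambda_{B}'=\Lambda$. Finally $\mathcal G(M)=G$: the top-left block is diagonal (the independent set $C$), $B\in\mathcal S(K_{k})$ supplies the clique, and the off-diagonal block has $(i,j)$-entry $b_{i}u_{j}$, which is nonzero exactly for $j\leq r$, i.e.\ exactly when $j\in S$ --- precisely the cluster adjacency. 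Hence $M\in\mathcal S(G)$.

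The case $r=1$ must be treated separately, since $\mathcal S(K_{k})$ has no eigenvector of support $1$. Here I would run the same scheme with $K_{k-1}$ (which has $\geq 2$ vertices, as $k\geq 3$) in place of $K_{k}$, use the constant-row-sums eigenvector $\mathbf 1$ of the Theorem~\ref{Rob1} matrix as the gluing eigenvector, and enlarge the star part by one vertex --- a matrix in $\mathcal S(S_{n-k+2})$ whose extra leaf carries the diagonal value $\mu_{1}$. The only extra work is to realise that $(n-k+2)$-element spectrum, which may now require a repeated eigenvalue; one produces it by making the appropriate leaf diagonals coincide (choosing $\Lambda_{A}$ so that its extreme values stay simple, since a star matrix cannot have a repeated extreme eigenvalue) and applying Lemma~\ref{L1} to the reduced problem.

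The step I expect to be the crux is making $\mathcal G(M)$ equal to $G$ exactly, not merely a sub- or supergraph: the clique and the independent set are automatic, but forcing the $C$-to-$K_{k}$ block to be nonzero precisely in the $S$-columns demands an eigenvector of the clique matrix supported exactly on $S$, and it is property (iii) of Theorem~\ref{Rob1}, which nails down the supports of all its eigenvectors, that makes this possible when $r\geq 2$. Coordinating (a) that support requirement, (b) the demand of Theorem~\ref{Rob1} that the clique spectrum have two distinct values in its first two slots, and (c) the count of how many eigenvalues fall to each piece, is the real bookkeeping of the proof; it is exactly the reconciliation of (b) with (c) that the ``two eigenvalues of multiplicity $\geq 2$'' hypothesis provides.
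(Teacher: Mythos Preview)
Your argument is essentially the paper's own: split off $n-k+1$ distinct values for a star matrix via Lemma~\ref{L1}, realise the remainder together with the star's corner value on $K_{k}$ via Theorem~\ref{Rob1} with that value placed in diagonal slot $k-r+2$ so that its Theorem~\ref{Rob1}(iii) eigenvector is supported exactly on $S$, and glue with Lemma~\ref{smi1}; the multiplicity hypothesis is invoked in the same way, to guarantee two distinct surviving values for the first two slots of the $K_{k}$ diagonal. You go slightly beyond the paper in noting that the construction needs $r\ge 2$ (otherwise slot $k-r+2$ falls outside a $k\times k$ matrix) and in sketching a separate treatment for $r=1$, a case the paper's proof does not address.
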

\begin{proof}
Let us write $\Lambda=\Lambda_{1}\cup \Lambda_{2}$, where
\begin{align*}
\Lambda_{1}=\{\lambda _{1,1},\lambda _{1,2},\ldots ,\lambda _{1,n-k+1}\},
\end{align*}
is a list whose elements are arranged in strictly decreasing order. We choose $n-k$ real numbers $\mu_{j}$, $j=1,\ldots,n-k$, satisfying
\begin{align*}
\lambda _{1,1}>\mu_{1}>\lambda _{1,2}>\mu_{2}>\lambda_{1,3}>\cdots >\lambda _{1,n-k}>\mu_{n-k}>\lambda _{1, n-k+1},
\end{align*}
such that $a=\left(\lambda _{1,1}+\sum\limits_{j=1}^{n-k}(\mu_{j}-\lambda _{1,j+1})\right)\notin \Lambda_{2}$. From Lemma \ref{L1} and Remark \ref{Rem1}, we can construct a bordered matrix $A=\begin{bmatrix}A_{1} & {\bf b} \\ {\bf b}^{T} & a\end{bmatrix}\in \mathcal{S}(S_{n-k+1})$ with spectrum $\Lambda_{1}$, where $A_{1}=diag\{\mu_{1},\ldots,\mu_{n-k}\}$ and $a=trace(A)-trace(A_{1})$. Now, we consider the list $\Gamma=\Lambda_{2}\cup \{a\}$. Since $\Lambda$ has at least two distinct elements with algebraic multiplicity greater than or equal to $2$ and $a\notin \Lambda_{2}$, we obtain that $\Gamma$ has at least three distinct eigenvalues. Taking into account the proof of Theorem \ref{Rob1}, we construct the diagonal matrix $D$ whose diagonal entries are the elements of $\Gamma$, where $a$ is its $(k-r+2,k-r+2)-$th position. Thus, from Theorem \ref{Rob1} ($iii.$), we get that the matrix $B=RDR^{-1}$ has an eigenvector ${\bf u}$ with $(k-r)-$zeros entries associated to $a$. Finally, Lemma \ref{smi1} allows us conclude that
\begin{align*}
M=\begin{bmatrix}
A_{1} & \mathbf{b}\mathbf{u}^{T} \\
\mathbf{u}\mathbf{b}^{T} & B
\end{bmatrix}\in \mathcal{S}(G),
\end{align*}
and has spectrum $\Lambda$.
\end{proof}

\begin{remark}\
\begin{itemize}
\item[1.] If in the previous theorem the list $\Lambda$ has more than $n-k+1$ distinct elements, then the condition that $\Lambda$ has at least two of elements with algebraic multiplicity greater than or equal to $2$ is not necessary.
\item[2.] When $G$ is a star, Theorem \ref{main1} is consistent with Remark \ref{Rem1} and allows us to construct immediately an $n\times n$ bordered matrix solution.
\end{itemize}
\end{remark}

An interesting conclusion derived from the previous theorem is the following result:
\begin{corollary}
Let $G=K_{n}-\{e\}$. Any list $\Lambda =\{\lambda _{1},\lambda _{2},\ldots ,\lambda _{n}\}$ with at least three distinct real numbers is the spectrum of a matrix $M\in \mathcal{S}(G)$. Also, when $n=3$, $M\in \mathcal{S}(G)$ if and only if $\sigma(M)$ has exactly three distinct elements.
\end{corollary}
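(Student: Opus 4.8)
The plan is to read off $K_n-\{e\}$ as one of the graphs to which Theorem~\ref{main1} applies, and to dispatch the small case $n=3$ separately. Write $e=uv$. Then $V(G)\setminus\{u,v\}$ induces $K_{n-2}$, so $G$ has a clique of order $k=n-2$, and $(C,S)$ with $C=\{u,v\}$, $S=V(G)\setminus\{u,v\}$ is a cluster: $u$ and $v$ are non-adjacent, $N_G(u)=N_G(v)=S$, and $C$ is maximal because every other vertex has all of $V(G)$ but itself as neighbourhood, hence a neighbourhood different from $S$. In particular $|C|=2=n-k$ and $|S|=n-2=k$, so here $r=k$.

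For $n\ge 4$ I would split according to the number of distinct entries of $\Lambda$. If $\Lambda$ has at least four distinct elements, this exceeds $n-k+1=3$, so by item~1 of the remark following Theorem~\ref{main1} the hypothesis on multiplicities is unnecessary, and Theorem~\ref{main1} with $k=n-2$ yields $M\in\mathcal{S}(G)$ with $\sigma(M)=\Lambda$. If $\Lambda$ has exactly three distinct elements, I would re-run the construction inside the proof of Theorem~\ref{main1}: write $\Lambda=\Lambda_1\cup\Lambda_2$ with $\Lambda_1=\{\lambda_{1,1}>\lambda_{1,2}>\lambda_{1,3}\}$ (one copy of each distinct value), choose interlacing $\mu_1>\mu_2$ with $a=\lambda_{1,1}+(\mu_1-\lambda_{1,2})+(\mu_2-\lambda_{1,3})\notin\Lambda_2$ (the admissible values of $a$ fill an open interval, so finitely many forbidden values can be avoided), and use Lemma~\ref{L1} together with Remark~\ref{Rem1} to build a $3\times 3$ bordered matrix $A=\begin{bmatrix}A_1 & \mathbf{b}\\ \mathbf{b}^T & a\end{bmatrix}\in\mathcal{S}(S_3)$ with $A_1=\mathrm{diag}\{\mu_1,\mu_2\}$, $\mathbf{b}$ without zero entries, and $\sigma(A)=\Lambda_1$. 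Then $\Gamma=\Lambda_2\cup\{a\}$ has $|\Lambda_2|+1=n-2=k$ entries and, since $\Lambda_2\ne\emptyset$ and $a\notin\Lambda_2$, at least two distinct values; applying Theorem~\ref{Rob1} to $\Gamma$ with $a$ placed in position $2=k-r+2$ and a repeated value of $\Gamma$ in position $1$ produces $B=RDR^{-1}\in\mathcal{S}(K_k)$ with eigenpair $(a,\mathbf{x}_2)$, $\mathbf{x}_2=(1,\dots,1,-(k-1))^T$. Normalising $\mathbf{x}_2$ to $\mathbf{u}$ and gluing $A$ and $B$ along the common value $a$ via Lemma~\ref{smi1} yields $M=\begin{bmatrix}A_1 & \mathbf{b}\mathbf{u}^T\\ \mathbf{u}\mathbf{b}^T & B\end{bmatrix}$ of order $n$ with $\sigma(M)=\Lambda_1\cup\Lambda_2=\Lambda$; and because neither $\mathbf{b}$ nor $\mathbf{u}$ has a zero entry, the block $\mathbf{b}\mathbf{u}^T$ is entrywise nonzero, so $\mathcal{G}(M)=K_n-\{uv\}=G$.

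For $n=3$, $G=K_3-\{e\}$ is the path $P_3$, which is the star $S_3$. For the ``if'' direction, given $\lambda_1>\lambda_2>\lambda_3$, pick $\mu_1,\mu_2$ with $\lambda_1>\mu_1>\lambda_2>\mu_2>\lambda_3$; the strict interlacing makes each $b_i^2$ of Lemma~\ref{L1} strictly positive, so the resulting bordered matrix has no zero off-diagonal entry, lies in $\mathcal{S}(S_3)=\mathcal{S}(G)$, and has spectrum $\{\lambda_1,\lambda_2,\lambda_3\}$; this is exactly Remark~\ref{Rem1}. For the ``only if'' direction, Proposition~\ref{prop1} gives $q(K_3-\{e\})=3$, so every $M\in\mathcal{S}(G)$ has at least three distinct eigenvalues, hence exactly three since $M$ is $3\times 3$.

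The hard part is the case $n\ge 4$ with $\Lambda$ having exactly three distinct values, because such a list need not contain two eigenvalues of multiplicity at least $2$ (for instance a spectrum of type $\lambda_1^{[n-2]},\lambda_2,\lambda_3$), so Theorem~\ref{main1} cannot be quoted verbatim. The point to be argued is that the multiplicity hypothesis there is used only to force the auxiliary list $\Gamma$ to have three distinct values, whereas for $K_n-\{e\}$ one has $r=k$, so the eigenvector of $B$ that the construction needs has $k-r=0$ zero entries, and then the two-distinct-element branch of the proof of Theorem~\ref{Rob1}(ii) — requiring only two distinct values in $\Gamma$, which are always available here — already suffices. Everything else is routine bookkeeping about block matrices and graphs of matrices.
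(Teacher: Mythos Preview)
Your proof is correct and follows the same blueprint as the paper—view $K_n-\{e\}$ as a graph with a clique of order $k=n-2$ and cluster $(C,S)$ with $|C|=2$, $r=|S|=k$, apply Theorem~\ref{main1}, and handle $n=3$ via Proposition~\ref{prop1}. The paper's own proof is a single sentence citing exactly those two results.

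Where you go further than the paper is in the case $n\ge 4$ with exactly three distinct eigenvalues. You correctly observe that Theorem~\ref{main1} as stated also demands two eigenvalues of multiplicity at least~$2$, which a spectrum such as $\{\lambda_1^{[n-2]},\lambda_2,\lambda_3\}$ does not satisfy, and you patch this by noting that for $K_n-\{e\}$ one has $r=k$, so the eigenvector $\mathbf{u}$ of $B$ needs $k-r=0$ prescribed zero entries; this is supplied by $\mathbf{x}_2$ in Theorem~\ref{Rob1}(iii), and Theorem~\ref{Rob1} requires only two distinct values in $\Gamma$, which is guaranteed since $a\notin\Lambda_2$ and $\Lambda_2\neq\emptyset$ for $n\ge 4$. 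This is a genuine clarification of a point the paper's one-line proof leaves implicit, and your argument for it is sound. One minor remark: when you say ``a repeated value of $\Gamma$ in position~1,'' all you actually need is any element of $\Lambda_2$ there—repetition is irrelevant; what matters is that the first diagonal entry of $D$ differs from $a$, and $a\notin\Lambda_2$ already ensures this.
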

\begin{proof}
The proof follows taking into account Theorem \ref{main1} and Proposition \ref{prop1}.
\end{proof}

\begin{example} Let $G$ be the graph given in Example \ref{Ex1}. Let $\Lambda=\{7^{[2]},1^{[2]},-3^{[3]},-5^{[2]}\}$. Following the proof from Theorem \ref{main1}, we construct a matrix $C\in \mathcal{S}(G)$ with spectrum $\Lambda$. In fact, let us partition $\Lambda$ as follows:
\begin{align*}
\Lambda=\{7,1,-3,-5\}\cup \{7,1,-3,-3,-5\}=\Lambda_{1}\cup \Lambda_{2}.
\end{align*}
Let us choose the real numbers $\mu_{1}=6$, $\mu_{2}=0$ and $\mu_{3}=-4$. Thus, $a=-2$. From Lemma \ref{L1} and Remark \ref{Rem1}, we get a bordered matrix
\begin{align*}
A=\begin{bmatrix}
6 & 0 & 0 & -\frac{\sqrt{33}}{2} \\ \\
0 & 0 & 0 & \frac{\sqrt{70}}{4} \\ \\
0 & 0 & -4 & -\frac{\sqrt{22}}{4} \\ \\
-\frac{\sqrt{33}}{2} & \frac{\sqrt{70}}{4} & -\frac{\sqrt{22}}{2} & -2
\end{bmatrix}=\begin{bmatrix}A_{1} & {\bf b} \\ {\bf b}^{T} & a\end{bmatrix}\in \mathcal{S}(S_{4})
\end{align*}
with spectrum $\Lambda_{1}$. Now, we consider the list $\Gamma=\Lambda_{2}\cup \{-2\}$. Since we need a matrix $B$ with spectrum $\Gamma$ and a eigenvector ${\bf u}$ with $4$-zeros entries associated to $-2$, then we choose the diagonal matrix $D=diag\{7,1,-3,-3,-5,-2\}$ and thus we get that
\begin{align*}
B=RDR^{-1}=\begin{bmatrix}
-\frac{31}{30} & \frac{29}{30} & \frac{37}{15} & \frac{9}{5} & \frac{9}{5} & 1 \\ \\
\frac{29}{30} & -\frac{31}{30} & \frac{37}{15} & \frac{9}{5} & \frac{9}{5} & 1 \\ \\
\frac{37}{15} & \frac{37}{15} & -\frac{38}{15} & \frac{9}{5} & \frac{9}{5} & 1 \\ \\
\frac{9}{5} & \frac{9}{5} & \frac{9}{5} & -\frac{6}{5} & \frac{9}{5} & 1 \\ \\
\frac{9}{5} & \frac{9}{5} & \frac{9}{5} & \frac{9}{5} & -\frac{6}{5} & 1 \\ \\
1 & 1 & 1 & 1 & 1 & 2
\end{bmatrix}
\end{align*}
is such that $B{\bf u}=-2{\bf u}$, where ${\bf u}^{T}=\frac{1}{\sqrt{2}}\begin{bmatrix}1 & -1 & 0 & 0 & 0 & 0 \end{bmatrix}$. Finally, applying Lemma \ref{smi1}, we obtain the matrix
\begin{align*}
C=\begin{bmatrix}
A_{1} & \mathbf{b}\mathbf{u}^{T} \\
\mathbf{u}\mathbf{b}^{T} & B
\end{bmatrix}={\normalsize \begin{bmatrix}
6 & 0 & 0 & -\frac{\sqrt{66}}{4} & \frac{\sqrt{66}}{4} & 0 & 0 & 0 & 0 \\ \\
0 & 0 & 0 & \frac{\sqrt{35}}{4} & -\frac{\sqrt{35}}{4} & 0 & 0 & 0 & 0 \\ \\
0 & 0 & -4 & -\frac{\sqrt{11}}{4} & \frac{\sqrt{11}}{4} & 0 & 0 & 0 & 0 \\ \\
-\frac{\sqrt{66}}{4} & \frac{\sqrt{35}}{4} & -\frac{\sqrt{11}}{4} & -\frac{31}{30} & \frac{29}{30} & \frac{37}{15} & \frac{9}{5} & \frac{9}{5} & 1 \\ \\
\frac{\sqrt{66}}{4} & -\frac{\sqrt{35}}{4} & \frac{\sqrt{11}}{4} & \frac{29}{30} & -\frac{31}{30} & \frac{37}{15} & \frac{9}{5} & \frac{9}{5} & 1 \\ \\
0 & 0 & 0 & \frac{37}{15} & \frac{37}{15} & -\frac{38}{15} & \frac{9}{5} & \frac{9}{5} & 1 \\ \\
0 & 0 & 0 & \frac{9}{5} & \frac{9}{5} & \frac{9}{5} & -\frac{6}{5} & \frac{9}{5} & 1 \\ \\
0 & 0 & 0 & \frac{9}{5} & \frac{9}{5} & \frac{9}{5} & \frac{9}{5} & -\frac{6}{5} & 1 \\ \\
0 & 0 & 0 & 1 & 1 & 1 & 1 & 1 & 2
\end{bmatrix}}\in \mathcal{S}(G),
\end{align*}
which has spectrum $\Lambda$.
\end{example}

\begin{theorem}\label{main3}
Let $G$ be a graph of order $n$ having a clique of order $k$ and a cluster $(C,S)$, where $\vert C\vert=n-k$ and $\vert S\vert=r\leq k$. For any list $\Lambda =\{\lambda _{1},\lambda _{2},\ldots ,\lambda _{n}\}$ of real numbers with at least $3$ distinct elements such that at least two of them have algebraic multiplicity greater than or equal to $2$, there exists a matrix $M\in \mathcal{S}(G(K_{n-k}))$ with spectrum $\Lambda$.
\end{theorem}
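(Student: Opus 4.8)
The plan is to follow the proof of Theorem~\ref{main1} almost verbatim, changing only how the block corresponding to the cluster vertices is built: in $\mathcal{S}(G)$ the vertices of $C$ are independent, so that block is diagonal, whereas in $\mathcal{S}(G(K_{n-k}))$ the vertices of $C$ form a clique, so that block must lie in $\mathcal{S}(K_{n-k})$. The one new ingredient is the observation that Theorem~\ref{Rob1} supplies such a block directly: if $A\in\mathcal{S}(K_{m})$ is a matrix produced by that theorem, every principal submatrix of $A$ still has all off-diagonal entries nonzero, so in the bordered decomposition $A=\begin{bmatrix}A_{1} & \mathbf{b}\\ \mathbf{b}^{T} & a\end{bmatrix}$ one has $A_{1}\in\mathcal{S}(K_{m-1})$ and $\mathbf{b}$ entrywise nonzero. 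This also explains why here it suffices that $\Lambda$ have three distinct values (rather than $n-k+1$, as in Theorem~\ref{main1}): the cluster block is no longer routed through Lemma~\ref{L1}, which required strict interlacing.

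Concretely, I would split $\Lambda=\Lambda_{1}\cup\Lambda_{2}$ with $|\Lambda_{1}|=n-k+1$ and $|\Lambda_{2}|=k-1$, now allowing $\Lambda_{1}$ to have repeated entries. Using the hypothesis that $\Lambda$ has at least three distinct values, two of them of algebraic multiplicity at least $2$, I would arrange this split so that $\Lambda_{1}$ has at least two distinct values and $\Lambda_{2}$ has at least two distinct values. I would then invoke Theorem~\ref{Rob1} for the list $\Lambda_{1}$, ordering its entries so that part~(ii) applies and so that the diagonal element $a=\tfrac{1}{n-k+1}(\lambda^{\prime}-\lambda^{\prime\prime})+\lambda^{\prime\prime}$ produced by part~(iv) — a convex combination of the first two (distinct) diagonal entries $\lambda^{\prime}\neq\lambda^{\prime\prime}$ — lies outside $\Lambda_{2}$; for instance, taking $\lambda^{\prime}>\lambda^{\prime\prime}$ to be two consecutive distinct values of $\Lambda$ forces $a$ into an interval disjoint from $\Lambda$. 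This yields $A=R^{\prime}D^{\prime}(R^{\prime})^{-1}\in\mathcal{S}(K_{n-k+1})$ with $\sigma(A)=\Lambda_{1}$, last diagonal entry $a$, and bordered decomposition $A=\begin{bmatrix}A_{1} & \mathbf{b}\\ \mathbf{b}^{T} & a\end{bmatrix}$ in which $A_{1}\in\mathcal{S}(K_{n-k})$ and $\mathbf{b}$ has no zero entry; moreover $\Gamma:=\Lambda_{2}\cup\{a\}$ has at least three distinct elements.

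From here I would argue exactly as in Theorem~\ref{main1}: form $D=\mathrm{diag}(\Gamma)$ with $a$ in the $(k-r+2)$-th position; by Theorem~\ref{Rob1}(ii)--(iii), $B=RDR^{-1}\in\mathcal{S}(K_{k})$ admits an eigenvector $\mathbf{u}$ for $a$ whose last $k-r$ entries vanish, normalized so that $\mathbf{u}^{T}\mathbf{u}=1$; and Lemma~\ref{smi1}, applied with the common value $a$, produces
\[
M=\begin{bmatrix}A_{1} & \mathbf{b}\mathbf{u}^{T}\\ \mathbf{u}\mathbf{b}^{T} & B\end{bmatrix},\qquad \sigma(M)=\Lambda_{1}\cup(\Gamma\setminus\{a\})=\Lambda .
\]
It then remains to check $\mathcal{G}(M)=G(K_{n-k})$: the first $n-k$ indices induce $K_{n-k}$ because $A_{1}\in\mathcal{S}(K_{n-k})$ — this is precisely the effect of adjoining the edges of $K_{n-k}$ on $C$; the last $k$ indices induce $K_{k}$ because $B\in\mathcal{S}(K_{k})$; and the $(i,\ell)$-entry of $\mathbf{b}\mathbf{u}^{T}$ is $b_{i}u_{\ell}$, nonzero exactly when $\ell\le r$, so each vertex of $C$ is joined precisely to the $r$ vertices of $S$.

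I expect the main obstacle to be the elementary but somewhat fussy bookkeeping of the second paragraph: showing that, under the stated hypotheses, one can always choose the partition $\Lambda=\Lambda_{1}\cup\Lambda_{2}$ and the ordering of $D^{\prime}$ so that Theorem~\ref{Rob1}(ii) is applicable while the scalar $a$ from Theorem~\ref{Rob1}(iv) avoids $\Lambda_{2}$, together with the borderline small cases (for instance $k$ small, where $|\Lambda_{2}|$ is tiny, or $n-k$ small), which are handled by the same counting arguments already used in the proof of Theorem~\ref{main1}. Apart from that, the proof is a transcription of the earlier one and should be brief.
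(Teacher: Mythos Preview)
Your proposal is correct and follows essentially the same route as the paper: split $\Lambda=\Lambda_{1}\cup\Lambda_{2}$ so that both parts have at least two distinct values, apply Theorem~\ref{Rob1} to $\Lambda_{1}$ to obtain $A\in\mathcal{S}(K_{n-k+1})$ with known corner entry $a\notin\Lambda_{2}$, apply Theorem~\ref{Rob1} to $\Gamma=\Lambda_{2}\cup\{a\}$ placing $a$ in position $k-r+2$ to obtain $B\in\mathcal{S}(K_{k})$ with the structured eigenvector $\mathbf{u}$, and glue via Lemma~\ref{smi1}. Your write-up is in fact slightly more careful than the paper's (you use the correct $\tfrac{1}{n-k+1}$ from Theorem~\ref{Rob1}(iv), explicitly note $A_{1}\in\mathcal{S}(K_{n-k})$ and that $\mathbf{b}$ is entrywise nonzero, and give a concrete device---consecutive distinct values---for forcing $a\notin\Lambda_{2}$).
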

\begin{proof}
Let us write $\Lambda=\Lambda_{1}\cup \Lambda_{2}$, where
\begin{align*}
\Lambda_{1}=\{\lambda _{1,1},\lambda _{1,2},\ldots ,\lambda _{1,n-k+1}\} \quad \text{and} \quad \Lambda_{2}=\Lambda-\Lambda_{1},
\end{align*}
with $\Lambda_{1}$ and $\Lambda_{2}$ having at least two distinct elements and such that
\begin{align*}
\frac{1}{n}(\lambda _{1,1}-\lambda _{1,2})+\lambda _{1,2}\notin \Lambda_{2}; \quad \quad \lambda _{1,1}\neq \lambda _{1,2}.
\end{align*}
From Theorem \ref{Rob1}, we consider the matrix $R$ (as in \eqref{I1}) of order $n-k+1$ and we construct the matrix $A=RD_{1}R^{-1}=(a_{ij})$, where $D_{1}$ is the diagonal matrix whose diagonal entries are the elements of $\Lambda_{1}$ and $(D_{1})_{n-k+1,n-k+1}=a=\frac{1}{n}(\lambda _{1,1}-\lambda _{1,2})+\lambda _{1,2}$. On the other hand, let us consider the list $\Gamma=\Lambda_{2}\cup \{a\}$. Since $a=\frac{1}{n}(\lambda _{1,1}-\lambda _{1,2})+\lambda _{1,2}\notin \Lambda_{2}$, $\Gamma$ has at least three distinct eigenvalues. Taking into account the proof of Theorem \ref{Rob1}, we construct the diagonal matrix $D_{2}$ whose diagonal entries are the elements of $\Gamma$, where $(D_{2})_{k-r+2,k-r+2}=a$. Thus, from Theorem \ref{Rob1} ($iii.$), we get that the matrix $B=RD_{2}R^{-1}$ has an eigenvector ${\bf u}$ with $(k-r)-$zeros entries associated to $a$. Finally, Lemma \ref{smi1} allows us conclude that
\begin{align*}
M=\begin{bmatrix}
A_{1} & \mathbf{b}\mathbf{u}^{T} \\
\mathbf{u}\mathbf{b}^{T} & B
\end{bmatrix}\in \mathcal{S}(G(K_{n-k})),
\end{align*}
and has spectrum $\Lambda$.
\end{proof}

\begin{example} Let $G(K_{3})$ the graph given in Example \ref{Ex2}. Let $\Lambda=\{7,-3^{[4]},-5^{[4]}\}$. We want to construct a matrix $C\in \mathcal{S}(G(K_{3}))$ with spectrum $\Lambda$. In fact, let us partition $\Lambda$ as follows:
\begin{align*}
\Lambda=\{7,-3^{[3]}\}\cup \{-3,-5^{[4]}\}=\Lambda_{1}\cup \Lambda_{2}.
\end{align*}
Note that $\frac{1}{4}(7-(-3))+(-3)=-\frac{1}{2}\notin \Lambda_{2}$. We construct the matrix
\begin{align*}
A=RD_{1}R^{-1}=\frac{1}{2}\begin{bmatrix}
-1 & 5 & 5 & 5 \\
5 & -1 & 5 & 5 \\
5 & 5 &	-1 & 5 \\
5 & 5 & 5 & -1
\end{bmatrix}=\begin{bmatrix}
A_{1} & {\bf b} \\
{\bf b}^{T} & -\frac{1}{2}
\end{bmatrix},
\end{align*}
where $D_{1}=diag\{7,-3,-3,-3\}$. We now consider the list $\Gamma=\Lambda_{2}\cup \{-\frac{1}{2}\}$. We need to construct a matrix $B$ with spectrum $\Gamma$ and with an eigenvector ${\bf u}$ having $k-r=4$-zeros entries associated to $-\frac{1}{2}$. Then, we choose the diagonal matrix $D_{2}=diag\{-3,-5,-5,-5,-5,-\frac{1}{2}\}$, where $(D_{2})_{6,6}=-\frac{1}{2}$. Thus, we have
\begin{align*}
B=RD_{2}R^{-1}=\frac{1}{12}\begin{bmatrix}
-29 & -23 & 4 & 4 & 4 & 4 \\
-23 & -29 & 4 & 4 & 4 & 4 \\
4 & 4 &	-56 & 4 & 4 & 4 \\
4 & 4 & 4 & -56 & 4 & 4 \\
4 & 4 &	4 & 4 & -56 & 4 \\
4 & 4 & 4 & 4 & 4 & -56
\end{bmatrix},
\end{align*}
which is such that $B{\bf u}=-\frac{1}{2}{\bf u}$, where ${\bf u}^{T}=\frac{1}{\sqrt{2}}\begin{bmatrix}-1 & 1 & 0 & 0 & 0 & 0 \end{bmatrix}$. Finally, applying Lemma \ref{smi1}, we obtain the matrix
\begin{align*}
C=\begin{bmatrix}
A_{1} & \mathbf{b}\mathbf{u}^{T} \\
\mathbf{u}\mathbf{b}^{T} & B
\end{bmatrix}={\normalsize\begin{bmatrix}
-\frac{1}{2} & \frac{5}{2} & \frac{5}{2} & \frac{5\sqrt{2}}{4} & -\frac{5\sqrt{2}}{4} & 0 & 0 & 0 & 0 \\ \\
\frac{5}{2} & -\frac{1}{2} & \frac{5}{2} & \frac{5\sqrt{2}}{4} & -\frac{5\sqrt{2}}{4} & 0 & 0 & 0 & 0 \\ \\
\frac{5}{2} & \frac{5}{2} & -\frac{1}{2} & \frac{5\sqrt{2}}{4} & -\frac{5\sqrt{2}}{4} & 0 & 0 & 0 & 0 \\ \\
\frac{5\sqrt{2}}{4} & \frac{5\sqrt{2}}{4} & \frac{5\sqrt{2}}{4} & -\frac{29}{12} & -\frac{23}{12} & \frac{1}{3} & \frac{1}{3} & \frac{1}{3} & \frac{1}{3} \\ \\
-\frac{5\sqrt{2}}{4} & -\frac{5\sqrt{2}}{4} & -\frac{5\sqrt{2}}{4} & -\frac{23}{12} & -\frac{29}{12} & \frac{1}{3} & \frac{1}{3} & \frac{1}{3} & \frac{1}{3} \\ \\
0 & 0 & 0 & \frac{1}{3} & \frac{1}{3} & -\frac{14}{3} & \frac{1}{3} & \frac{1}{3} & \frac{1}{3} \\ \\
0 & 0 & 0 & \frac{1}{3} & \frac{1}{3} & \frac{1}{3} & -\frac{14}{3} & \frac{1}{3} & \frac{1}{3} \\ \\
0 & 0 & 0 & \frac{1}{3} & \frac{1}{3} & \frac{1}{3} & \frac{1}{3} & -\frac{14}{3} & \frac{1}{3} \\ \\
0 & 0 & 0 & \frac{1}{3} & \frac{1}{3} & \frac{1}{3} & \frac{1}{3} & \frac{1}{3} & -\frac{14}{3}
\end{bmatrix}}\in \mathcal{S}(G(K_{3})),
\end{align*}
which has spectrum $\Lambda$.
\end{example}

Let $n$ and $j$ be given positive integers, with {\color{red}$j\leq n-2$}, and consider the family of graphs:
\begin{align}\label{Gi}
\mathcal{G}(i)=K_{i} \vee (K_{j}\cup K_{n-i-j}), \quad 1\leq i\leq n-j-1.
\end{align}
Labeling the vertices of $\mathcal{G}(i)$ starting with the vertices of $K_{j}$, continuing with the vertices of $K_{i}$ and finishing with the vertices of $K_{n-i-j}$, we obtain the following result, which can be seen as a particular case of Theorem \ref{main3}:
\begin{corollary}\label{main2}
Let $\mathcal{G}(i)$ be the family of graphs defined in \eqref{Gi}. For any list $\Lambda =\{\lambda _{1},\lambda _{2},\ldots ,\lambda _{n}\}$ of real numbers with at least three distinct elements such that at least two of them have algebraic multiplicity greater than or equal to $2$, there exists a matrix $M\in \mathcal{S}(\mathcal{G}(i))$ with spectrum $\Lambda$.
\end{corollary}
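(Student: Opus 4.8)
The plan is to exhibit $\mathcal{G}(i)$ as a graph of the form $G(K_{n-k})$ and then to apply Theorem \ref{main3}. With the labelling prescribed above, set $k:=i+j$ and $r:=i$, and let $G$ be the spanning subgraph of $\mathcal{G}(i)$ obtained by deleting every edge that joins two vertices inside the last block $K_{n-i-j}$. Then $G$ consists of a clique $K_{k}$ on the first $k$ vertices $V(K_{j})\cup V(K_{i})$ (these induce a clique because $K_{j}$ and $K_{i}$ are cliques that are completely joined in $\mathcal{G}(i)$), together with $n-k$ further, mutually non-adjacent vertices, each joined in $G$ to exactly the $r$ vertices of $V(K_{i})$ and to no others.

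The first step is to verify that $(C,S):=\bigl(V(K_{n-i-j}),V(K_{i})\bigr)$ is a cluster of $G$, with $|C|=n-i-j=n-k$ and $|S|=i=r\le k$. Independence of $C$ in $G$ and the fact that every vertex of $C$ has neighbourhood exactly $S$ are immediate from the construction; maximality holds because the only vertices of $G$ not adjacent to $C$ lie in $V(K_{j})$, and such a vertex $v$ has neighbourhood $\bigl(V(K_{j})\setminus\{v\}\bigr)\cup V(K_{i})$, which strictly contains $S$ as soon as $j\ge 2$. Since restoring the edges of $K_{n-k}=K_{n-i-j}$ on $C$ reproduces $\mathcal{G}(i)$, we obtain $\mathcal{G}(i)=G(K_{n-k})$, and Theorem \ref{main3} then delivers, for every list $\Lambda$ with at least three distinct values of which two have algebraic multiplicity at least $2$, a matrix $M\in\mathcal{S}(\mathcal{G}(i))$ with $\sigma(M)=\Lambda$. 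A permutation similarity, which does not affect the spectrum, is all that is needed to reconcile the block order used in the proof of Theorem \ref{main3} (cluster first, with $S$ at the front of the clique block) with the order $K_{j},K_{i},K_{n-i-j}$ of the present statement.

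Two degenerate configurations deserve a separate word. If $j=1$ while $n-i-j\ge 2$, then $C=V(K_{n-i-j})$ is no longer a maximal independent set with a common neighbourhood --- the lone vertex of $K_{1}$ could be appended --- so strictly speaking $\mathcal{G}(i)$ is not $G(K_{n-k})$ for a bona fide cluster; however the construction inside the proof of Theorem \ref{main3} never invokes maximality, only that $\mathcal{G}(i)$ decomposes as a clique $K_{n-k}$, a clique $K_{k}$, and a complete bipartite graph joining the first to an $r$-subset of the second, which still holds here with $k=i+1$ and $r=i$, so the same matrix $M$ serves. If $j=n-i-j=1$, then $\mathcal{G}(i)=K_{i}\vee(K_{1}\cup K_{1})=K_{n}-\{e\}$, and the statement is already contained in the corollary to Theorem \ref{main1} (together with Proposition \ref{prop1}). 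The only genuinely delicate point is this bookkeeping: confirming that the matrix produced by (the proof of) Theorem \ref{main3}, after the relabelling of vertices, has graph exactly $\mathcal{G}(i)$, and that $(C,S)$ is a legitimate cluster --- or, in the degenerate cases, that the construction does not in fact require it to be.
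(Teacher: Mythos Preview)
Your proof is correct and follows the same strategy as the paper---recognising $\mathcal{G}(i)$ as a graph of the form $G(K_{n-k})$ and invoking Theorem~\ref{main3}---except that you take the cluster on $V(K_{n-i-j})$ (so $k=i+j$), whereas the paper takes the symmetric choice $C=V(K_{j})$ with clique $K_{i}\vee K_{n-i-j}\cong K_{n-j}$ (so $k=n-j$); by the obvious symmetry of $K_{j}$ and $K_{n-i-j}$ in $K_{i}\vee(K_{j}\cup K_{n-i-j})$ the two decompositions are interchangeable. Your handling of the degenerate cases is in fact more thorough than the paper's one-line proof; the only configuration you do not name explicitly is $j\ge 2$ with $n-i-j=1$ (where $\lvert C\rvert=1$), but your own observation that the construction in Theorem~\ref{main3} uses only the block decomposition and not the full cluster axioms covers that case as well.
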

\begin{proof}
It is follows from Theorem \ref{main3}, by considering $G_{i}$, with $1\leq i\leq n-j-1$, as the graph having a clique of order $n-j$ and a cluster $(C,S)$, where $\vert C\vert=j$ and $\vert S\vert=i$. Thus, for each $1\leq i\leq n-j-1$, $\mathcal{G}(i)$ can be seen as the graph $G_{i}(K_{j})$.
\end{proof}

\begin{example} Let $n=9$, $j=2$ and $\mathcal{G}(4)=K_{4}\vee (K_{2}\cup K_{3})$ be the graph depicted in Figure 2:
\begin{center}
\begin{tikzpicture}
\draw [line width=0.5pt] (0.,1.)-- (2.,1.);
\draw [line width=0.5pt] (0.,1.)-- (4.,2.);
\draw [line width=0.5pt] (0.,1.)-- (4.,-2.);
\draw [line width=0.5pt] (0.,1.)-- (2.,-1.);
\draw [line width=0.5pt] (0.,-1.)-- (4.,2.);
\draw [line width=0.5pt] (0.,-1.)-- (4.,-2.);
\draw [line width=0.5pt] (0.,-1.)-- (2.,-1.);
\draw [line width=0.5pt] (4.,2.)-- (6.,0.);
\draw [line width=0.5pt] (4.,2.)-- (7.,-2.);
\draw [line width=0.5pt] (4.,2.)-- (7.,2.);
\draw [line width=0.5pt] (2.,1.)-- (6.,0.);
\draw [line width=0.5pt] (2.,1.)-- (7.,-2.);
\draw [line width=0.5pt] (2.,1.)-- (7.,2.);
\draw [line width=1.pt] (2.,1.)-- (4.,-2.);
\draw [line width=0.5pt] (4.,-2.)-- (6.,0.);
\draw [line width=0.5pt] (4.,-2.)-- (7.,-2.);
\draw [line width=0.5pt] (4.,-2.)-- (7.,2.);
\draw [line width=0.5pt] (2.,-1.)-- (7.,-2.);
\draw [line width=0.5pt] (2.,-1.)-- (7.,2.);
\draw [line width=1.pt] (2.,-1.)-- (4.,2.);
\draw [line width=0.5pt] (2.,1.)-- (0.,-1.);
\draw [line width=0.5pt] (6.,0.)-- (2.,-1.);
\draw [line width=1.pt] (7.,2.)-- (6.,0.);
\draw [line width=1.pt] (7.,2.)-- (7.,-2.);
\draw [line width=1.pt] (6.,0.)-- (7.,-2.);
\draw [line width=1.pt] (0.,1.)-- (0.,-1.);
\draw [line width=1.pt] (4.,2.)-- (2.,1.);
\draw [line width=1.pt] (2.,1.)-- (2.,-1.);
\draw [line width=1.pt] (4.,-2.)-- (2.,-1.);
\draw [line width=1.pt] (4.,2.)-- (4.,-2.);
\begin{scriptsize}
\draw [fill=black] (0.,1.) circle (2.pt);
\draw (0.,1.3) node {$1$};
\draw [fill=black] (0.,-1.) circle (2.pt);
\draw (0.,-1.3) node {$2$};
\draw [fill=black] (4.,2.) circle (2.pt);
\draw (4.,2.3) node {$5$};
\draw [fill=black] (2.,-1.) circle (2.pt);
\draw (2.,-1.3) node {$4$};
\draw [fill=black] (2.,1.) circle (2.pt);
\draw (2.,1.3) node {$3$};
\draw [fill=black] (4.,-2.) circle (2.pt);
\draw (4.,-2.3) node {$6$};
\draw [fill=black] (7.,-2.) circle (2.pt);
\draw (7.,-2.3) node {$9$};
\draw [fill=black] (6.,0.) circle (2.pt);
\draw (6.3,0) node {$8$};
\draw [fill=black] (7.,2.) circle (2.pt);
\draw (7.,2.3) node {$7$};
\draw (4.,-3) node {\text{Fig 2.} $G_{4,2} = K_{4}\vee (K_{2}\cup K_{3})$};
\end{scriptsize}
\end{tikzpicture}
\end{center}

Note that $\mathcal{G}(4)=K_{4}\vee (K_{2}\cup K_{3})=G_{4}(K_{2})$, where $G_{4}$ is the graph having a clique of order $7$ and a cluster $(C,S)$, where $\vert C\vert=2$ and $\vert S\vert=4$. Let $\Lambda=\{8,-1^{[3]},-2^{[5]}\}$. From Corollary \ref{main2}, we construct a matrix $M\in \mathcal{S}(\mathcal{G}(4))$ with spectrum $\Lambda$. In fact, let us partition $\Lambda$ as follows:
\begin{align*}
\Lambda=\{8,-1^{[2]}\}\cup \{-1,-2^{[5]}\}=\Lambda_{1}\cup \Lambda_{2}.
\end{align*}
We construct the matrix
\begin{align*}
A=RD_{1}R^{-1}=
\begin{bmatrix}
1 &	1 & 1 \\
1 &	1 &	-1 \\
1 &	-2 & 0
\end{bmatrix}
\begin{bmatrix}
8 &	0 &	0 \\
0 &	-1 & 0 \\
0 &	0 &	-1
\end{bmatrix}
\begin{bmatrix}
\frac{1}{3} & \frac{1}{3} & \frac{1}{3} \\ \\
\frac{1}{6} & \frac{1}{6} &	-\frac{1}{6} \\ \\
\frac{1}{2} & -\frac{1}{2} & 0
\end{bmatrix}=
\begin{bmatrix}
2 &	3 &	3 \\
3 &	2 &	3 \\
3 &	3 &	2
\end{bmatrix}
=(a_{ij}),
\end{align*}
which has spectrum $\Lambda_{1}$. Also, note that $a_{33}=2\notin \Lambda_{2}$. Next, we consider the list $\Gamma=\Lambda_{2}\cup \{2\}$, which has three distinct eigenvalues and choosing $R$ (as in \eqref{I1}) of order $7$. We construct the matrix
\begin{align*}
B=RD_{2}R^{-1}=
\begin{bmatrix}
-\frac{32}{21} & \frac{10}{21} & \frac{10}{21} & -\frac{6}{7} & \frac{1}{7} & \frac{1}{7} & \frac{1}{7} \\ \\
\frac{10}{21} & -\frac{32}{21} & \frac{10}{21} & -\frac{6}{7} & \frac{1}{7} & \frac{1}{7} & \frac{1}{7} \\ \\
\frac{10}{21} & \frac{10}{21} &	-\frac{32}{21} & -\frac{6}{7} & \frac{1}{7} & \frac{1}{7} & \frac{1}{7} \\ \\
-\frac{6}{7} & -\frac{6}{7} & -\frac{6}{7} & \frac{8}{7} & \frac{1}{7} & \frac{1}{7} & \frac{1}{7}
\\ \\
\frac{1}{7} & \frac{1}{7} & \frac{1}{7} & \frac{1}{7} & -\frac{13}{7} & \frac{1}{7} & \frac{1}{7}
\\ \\
\frac{1}{7} & \frac{1}{7} & \frac{1}{7} & \frac{1}{7} & \frac{1}{7} & -\frac{13}{7} & \frac{1}{7} \\ \\
\frac{1}{7} & \frac{1}{7} & \frac{1}{7} & \frac{1}{7} & \frac{1}{7} & \frac{1}{7} & -\frac{13}{7}
\end{bmatrix},
\end{align*}
where $D_{2}=diag\{-1,-2,-2,-2,2,-2,-2\}$. Also, $B{\bf u}=2{\bf u}$, being $${\bf u}^{T}=\frac{1}{2\sqrt{3}}\begin{bmatrix}-1 & -1 & -1 & 3 & 0 & 0 & 0 \end{bmatrix}.$$
Finally, applying Lemma \ref{smi1}, we obtain the matrix
\begin{align*}
M=\begin{bmatrix}
2 &	3 &	-\frac{\sqrt{3}}{2} & -\frac{\sqrt{3}}{2} & -\frac{\sqrt{3}}{2} & \frac{3\sqrt{3}}{2} & 0 & 0 & 0 \\ \\
3 &	2 & -\frac{\sqrt{3}}{2} & -\frac{\sqrt{3}}{2} & -\frac{\sqrt{3}}{2} & \frac{3\sqrt{3}}{2} & 0 & 0 & 0 \\ \\
-\frac{\sqrt{3}}{2} & -\frac{\sqrt{3}}{2} & -\frac{32}{21} & \frac{10}{21} & \frac{10}{21} & -\frac{6}{7} & \frac{1}{7} & \frac{1}{7} & \frac{1}{7} \\ \\
-\frac{\sqrt{3}}{2} & -\frac{\sqrt{3}}{2} & \frac{10}{21} & -\frac{32}{21} & \frac{10}{21} & -\frac{6}{7} & \frac{1}{7} & \frac{1}{7} & \frac{1}{7} \\ \\
-\frac{\sqrt{3}}{2} & -\frac{\sqrt{3}}{2} & \frac{10}{21} &	\frac{10}{21} &	-\frac{32}{21} & -\frac{6}{7} &	\frac{1}{7}	& \frac{1}{7} & \frac{1}{7} \\ \\
\frac{3\sqrt{3}}{2} & \frac{3\sqrt{3}}{2} &	-\frac{6}{7} & -\frac{6}{7} & -\frac{6}{7} & \frac{8}{7} & \frac{1}{7} &	\frac{1}{7}	& \frac{1}{7}
\\ \\
0 & 0 & \frac{1}{7} & \frac{1}{7} & \frac{1}{7} & \frac{1}{7} & -\frac{13}{7} & \frac{1}{7} & \frac{1}{7} \\ \\
0 &	0 & \frac{1}{7}	& \frac{1}{7} &	\frac{1}{7} & \frac{1}{7} &	\frac{1}{7} & -\frac{13}{7} & \frac{1}{7} \\ \\
0 & 0 & \frac{1}{7} & \frac{1}{7} &	\frac{1}{7} & \frac{1}{7} &	\frac{1}{7} & \frac{1}{7} &	-\frac{13}{7}
\end{bmatrix}\in \mathcal{S}(\mathcal{G}(4)),
\end{align*}
which has spectrum $\Lambda$.
\end{example}\ \\

{\bf Acknowledgements.} \\

%The authors express their thanks to the referee for the valuable comments which led to an improve version of the paper. The proof of Theorem~\ref{TRD} is due to the referee. The original proof by the authors was longer.
The research of A. I. Julio was supported by Universidad Cat\'{o}lica del Norte-VRIDT 036-2020, N\'UCLEO UCN VRIDT-083-2020, Chile.\\

{\bf Declaration of Competing Interest}\\

There is no competing interests.

\bigskip
\begin{flushleft}
\textbf{Roberto C. D\'{i}az} \\
\texttt{Departamento de Matem\'aticas}, \\
\texttt{Universidad Cat\'olica del Norte} \\
\texttt{Avenida Angamos 0610}, \\
\texttt{Antofagasta, Chile} \\
\texttt{E-mails}: \textit{rdiaz01@ucn.cl}
\end{flushleft}
\bigskip
\begin{flushleft}
\textbf{Ana I. Julio} \\
\texttt{Departamento de Matem\'aticas}, \\
\texttt{Universidad Cat\'olica del Norte} \\
\texttt{Avenida Angamos 0610}, \\
\texttt{Antofagasta, Chile} \\
\texttt{E-mails}: \textit{ajulio@ucn.cl}
\end{flushleft}

\end{document}